\documentclass[11pt,letterpaper]{article}
\usepackage{arxiv_compat}
\usepackage[numbers,sort&compress]{natbib}
\def\newblock{\ }%
\bibpunct[, ]{[}{]}{,}{n}{}{,}%
\usepackage{chapterbib}
\usepackage[colorlinks=false,breaklinks=true,bookmarks=true,hidelinks]{hyperref}
\hypersetup{pdfauthor={Yingzhe Xu},pdftitle={When Can an Infinite-Stream Evaluation Support Dynamic Programming? Compatibility, Recursive Compression, and Generalized Summation}}

\theoremstyle{TH}
\newtheorem{theorem}{Theorem}[section]
\newtheorem{lemma}{Lemma}[section]
\newtheorem{proposition}{Proposition}[section]
\newtheorem{corollary}{Corollary}[section]
\theoremstyle{EX}

\newtheorem{remark}{Remark}[section]

{\theoremstyle{TH}\newtheorem*{keylemma}{Lemma}}
{\theoremstyle{TH}\newtheorem*{keyproposition}{Proposition}}
{\theoremstyle{TH}\newtheorem*{keycorollary}{Corollary}}
{\theoremstyle{EX}\newtheorem*{keydefinition}{Definition}}
{\theoremstyle{EX}\newtheorem*{keyremark}{Remark}}

\newcommand{\Nzero}{\mathbb{N}_0}

\newcommand{\Dclass}{\mathsf{D}}
\newcommand{\Fclass}{\mathsf{F}}
\newcommand{\Sclass}{\mathsf{S}}
\newcommand{\Oclass}{\mathsf{O}}
\newcommand{\Mclass}{\mathsf{M}}
\newcommand{\aut}{\mathrm{aut}}
\newcommand{\prodtop}{\tau_{\mathrm{prod}}}
\newcommand{\suptop}{\tau_{\infty}}
\newcommand{\BX}{\mathcal{B}_X}
\DeclareMathOperator{\Log}{Log}

\begin{document}
\TITLE{When Can an Infinite-Stream Evaluation Support Dynamic Programming? Compatibility, Recursive Compression, and Generalized Summation}
\ARTICLEAUTHORS{\AUTHOR{Yingzhe Xu}\AFF{The Australian National University}}
\ABSTRACT{Dynamic programming for nonstandard infinite-horizon criteria requires more than finite stream values: one-step control substitutions must remain evaluable, and continuation information must propagate recursively. For expected stage-reward streams, we separate domain admissibility, finite-dimensional continuous autonomous realizability, scalar sufficiency, and contextual order consistency. For absolutely summable weighted criteria, we prove that the minimum Euclidean dimension of an exact continuous autonomous continuation state equals the temporal Hankel rank. Thus finite Hankel rank exactly characterizes when temporal nonstationarity can be replaced by finite endogenous state augmentation; the lower bound applies even to nonlinear continuous encoders and already holds in a one-state MDP. For literal Candelpergher Ramanujan summation, the same common-continuation test fails earlier: changing only the current action can leave the evaluation domain. Abel summation provides a contrasting prefix-closed benchmark.}
\KEYWORDS{dynamic programming; infinite-horizon criteria; Markov decision processes; Hankel rank; state augmentation; recursive realization; Ramanujan summation; Abel summation}
\maketitle

\section{Introduction}
\label{sec:introduction}

Dynamic programming requires more than a finite value for every infinite reward stream. A current action must compose with a common continuation in a way that remains inside the evaluation domain, and the continuation information needed after one step must be recursively propagable. Standard discounted criteria build these properties into the objective; a general infinite-stream evaluation need not.

We study deterministic streams of \emph{expected stage rewards} generated by finite Markov decision processes. For an initial state $s$ and policy $\pi$, let
\[
g^\pi(s)=\bigl(\mathbb E_s^\pi[r(S_0,A_0)],\mathbb E_s^\pi[r(S_1,A_1)],\ldots\bigr),
\qquad
J_E^\pi(s)=E(g^\pi(s))
\]
whenever the possibly partial functional $E$ is defined at that stream. The object is therefore an evaluation of the expected-reward stream, not the full return distribution or an expectation of a nonlinear pathwise functional.

\subsection{Compatibility gates and main results}

The paper organizes Bellman compatibility into four successive questions. First, \emph{domain admissibility}: do the one-step prefixing and mixture operations generated by control stay inside the domain? Second, \emph{finite-dimensional continuous autonomous realizability}: can a common continuation be propagated exactly through a fixed Euclidean state without an external clock? Third, \emph{scalar sufficiency}: can the evaluation value itself serve as that state? Fourth, \emph{contextual order consistency}: is the scalar comparison preserved when the same continuation is inserted into a common control context? Stagewise monotonicity is tracked separately. On a rich bounded-stream class these conditions form a strict hierarchy.

Our first principal result gives an exact memory characterization for absolutely summable weighted criteria
\[
E_w(x)=\sum_{n\ge0}w_nx_n,
\qquad w\in\ell^1.
\]
Let $S$ be the coordinate left shift on the weight sequence and define
\[
r_w=\dim\operatorname{span}\{S^iw:i\ge0\},
\qquad
H_w=(w_{i+j})_{i,j\ge0}.
\]
For both the product and sup-norm topologies on $X=[0,1]^{\mathbb N_0}$, Theorem~\ref{thm:weighted-hankel-characterization} proves
\[
k_{\min}^{\mathrm{cont,aut}}(E_w)
=
r_w
=
\operatorname{rank}_{\mathrm{alg}}H_w.
\]
Finite rank yields an explicit affine residual-coordinate realization. Conversely, $k+1$ independent residual statistics produce a $(k+1)$-dimensional cube that exact autonomy would have to inject continuously into $\mathbb R^k$, contradicting invariance of domain. The lower bound therefore applies to arbitrary continuous nonlinear Euclidean encoders within the stated continuation architecture, not only to linear or state-affine realizations.

From a dynamic-programming perspective, this is an exact state-augmentation theorem. If calendar time is supplied externally, every fixed weight sequence admits a scalar time-indexed continuation value. Theorem~\ref{thm:weighted-hankel-characterization} identifies exactly when that exogenous clock can instead be eliminated and replaced by a fixed finite-dimensional endogenous state. Thus $r_w$ separates weighted criteria that admit exact finite autonomous augmentation from those for which, within the present continuous architecture, an exact formulation must retain an external clock or leave the finite-dimensional class. The lower bound already holds in the one-state two-action model of Lemma~\ref{lem:richness}, so the required memory is induced by the objective rather than by physical-state complexity.

Classical Hankel/formal-series realization theory supplies the finite-rank algebraic backbone \citep{schutzenberger1961automata,carlylepaz1971realizations,fliess1974hankel,sontag1979realization}. The new point is class-relative but stronger on competitor regularity: for the present continuation architecture, ordinary temporal row rank remains sharp even against arbitrary continuous nonlinear Euclidean encoders, with no linearity, differentiability, or analyticity assumed of the competing representation. Section~\ref{sec:discussion} gives the detailed comparison with nonlinear, topological, and information-state realization theories.

Our second principal result applies the same \emph{common-continuation substitution} one gate earlier. For literal Candelpergher Ramanujan summation \citep{candelpergher2017}, Carlson uniqueness makes the analytic sample domain tail-rigid. Theorem~\ref{thm:ram-domain-obstruction} shows that in a one-state two-action self-loop MDP the zero continuation is admissible, whereas changing only the current action produces $(1,0,0,\ldots)$, which is not. Thus the two action alternatives need not share an evaluation domain, so a Bellman state or backup is not yet well posed. This is a domain-level obstruction, not an infinite-memory claim: it arises before recursive compression or optimality questions are meaningful. Separate propositions show scalar-value insufficiency and failure of stagewise monotonicity.

These two results are different failure levels of one operation: keep the continuation fixed and vary the current prefix or branch. Ramanujan can fail before factorization is meaningful; total weighted criteria pass the domain gate, after which temporal Hankel rank gives the exact autonomous continuation-state dimension. A supporting summability result shows that finite-support regularity plus scalar prefix sufficiency forces the classical additive prefix law, while Abel summation supplies a positive generalized-summation benchmark.

\subsection{Positioning and scope}

Infinite-stream valuations and undiscounted MDP objectives already admit rich axiomatic and finitely additive theories \citep{jonsson2023axiomatic,neyman2023valuations,flesch2026finitelyadditive}. Related recursive-state constructions arise for Bellman-closed statistics, recursive preferences, and preference-augmented states \citep{rowland2019statistics,marthe2023beyond,haskell2026history,bommier2017monotone,marinacci2026recursive}. Our question is prior to optimal-control analysis: for a specified expected-stream functional, are one-step substitutions admissible, and what exact continuation state do they require? The autonomous qualifier excludes an externally supplied calendar clock; the lower bound does not exclude approximation, discontinuous encodings, or infinite-dimensional states.

The closest realization-theoretic comparison is deliberately narrower than a general nonlinear-realization claim. Classical one-variable Hankel theory identifies finite rank with recurrence/rationality and minimal linear or state-affine realizations, whereas nonlinear realization results use stronger regularity assumptions or different Hankel-type invariants. Theorem~\ref{thm:weighted-hankel-characterization} asks a dynamic-programming-specific question: once the physical MDP is fixed, how much endogenous memory must the temporal objective itself add to support repeated one-step continuation substitutions? In the bounded one-hole architecture, the answer is the ordinary temporal row rank even against all continuous Euclidean encoders. The Hankel object therefore belongs to the criterion's weight sequence rather than to the transition dynamics. Section~\ref{sec:discussion} makes the comparison with the closest realization results precise.

Sections~\ref{sec:setup}--\ref{sec:separation} develop the compatibility classes and sharp weighted theorem; Sections~\ref{sec:summability}--\ref{sec:abel} treat generalized summation, the Ramanujan obstruction, and the Abel benchmark; Section~\ref{sec:discussion} gives the literature comparison and scope limits. Supporting derivations are collected in the electronic companion.

\section{Setup and compatibility notions}
\label{sec:setup}

\subsection{Finite MDPs and expected stage-reward streams}

Let
\[
M=(S,A,P,r)
\]
be a finite MDP, using standard finite-state MDP notation \citep{puterman1994mdp}. The state space \(S\) is finite, each admissible action set \(A(s)\) is finite,
\[
P(s' \mid s,a)
\]
is the transition probability, and \(r(s,a)\in\mathbb{R}\) is a bounded stage reward. A policy class is always part of the problem specification; no result below silently transfers between stationary, nonstationary, randomized, and history-dependent policy classes.

For an initial state \(s\) and policy \(\pi\), define the expected stage reward at time \(t\) by
\[
g_t^\pi(s)
=
\mathbb{E}_s^\pi\!\left[r(S_t,A_t)\right],
\]
and the corresponding expected stage-reward stream by
\[
g^\pi(s)
=
\bigl(g_0^\pi(s),g_1^\pi(s),\ldots\bigr).
\]
An infinite-stream evaluation is a partial map
\[
E:D_E\to\mathbb{R},
\qquad
D_E\subseteq \mathbb{R}^{\Nzero}.
\]
Whenever \(g^\pi(s)\in D_E\), the policy value under \(E\) is
\[
J_E^\pi(s)=E(g^\pi(s)).
\]

This formulation intentionally separates the present setting from distributional objectives. The functional \(E\) acts after stagewise expectation has produced a deterministic stream. This expected-stream semantics also appears explicitly in recent finite-MDP valuation work \citep{neyman2023valuations,flesch2026finitelyadditive}.

\subsection{Stream operations}

For a stream \(x=(x_0,x_1,\ldots)\), define the left shift
\[
Sx=(x_1,x_2,\ldots).
\]
For \(c\in\mathbb{R}\), define the prefix operator
\[
c\oplus x=(c,x_0,x_1,\ldots),
\]
and write
\[
Jx=0\oplus x
\]
for zero prefixing. For a finite probability vector \(p=(p_1,\ldots,p_m)\) and streams \(x^1,\ldots,x^m\), define the coordinatewise mixture
\[
M_p(x^1,\ldots,x^m)
=
\sum_{i=1}^m p_i x^i.
\]

The strict-separation results are stated on
\[
X=[0,1]^{\Nzero}.
\]
For this stream class, let \(\BX\) denote the family of one-hole contexts
\begin{equation}
\label{eq:context}
B[x]
=
c\oplus
\left(
p\,x+\sum_{i=1}^m p_i y^i
\right),
\end{equation}
where \(m\ge 0\), \(c\in[0,1]\), the fixed streams \(y^i\) belong to \(X\), and
\[
p\ge 0,\qquad
p_i\ge 0,\qquad
p+\sum_{i=1}^m p_i=1.
\]
Repeated zero prefixing is included by taking \(c=0\), \(p=1\), and \(m=0\).

The family \(\BX\) abstracts the one-step structure ``current expected reward plus a finite mixture of continuations.'' In an arbitrary model class, the appropriate context family may be smaller; one should then restrict attention to contexts actually realizable by that model and policy class. The one-hole form is deliberate: it tests whether one continuation can be substituted while the other successor continuations are held fixed. It is therefore a necessary substitutability test for an exact multi-successor Bellman architecture on any model class that admits these one-hole substitutions, because such an architecture must remain valid after fixing all but one feasible successor continuation. We do not claim that closure or realizability for \(\BX\) alone is sufficient to construct a general multi-input Bellman backup.

\subsection{Why the full bounded stream cube is decision-relevant}

The full cube \(X\) is generated by an elementary finite MDP under a sufficiently rich policy class.

\begin{lemma}[Policy-stream richness]
\label{lem:richness}
Consider a one-state two-action self-loop MDP with actions \(a_0,a_1\) and rewards
\[
r(a_0)=0,
\qquad
r(a_1)=1.
\]
Under arbitrary nonstationary randomized policies, the set of expected stage-reward streams is exactly
\[
[0,1]^{\Nzero}.
\]
\end{lemma}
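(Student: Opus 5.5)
The plan is to prove the two inclusions separately. Because there is only one state, the process stays at that state forever, so the only randomness is in the action choice. At time \(t\) the stage reward \(r(S_t,A_t)\) is the indicator that \(A_t=a_1\). Hence, for any policy \(\pi\),
\[
g_t^\pi=\mathbb{P}^\pi(A_t=a_1)\in[0,1],
\]
which gives \(g^\pi\in[0,1]^{\Nzero}\) at once. For this direction I only need that \(\mathbb{P}^\pi\) is a well-defined probability measure on histories. The Ionescu--Tulcea construction from the policy kernels supplies this, and it applies equally to history-dependent randomized policies.

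For the reverse inclusion, fix an arbitrary \(x=(x_0,x_1,\ldots)\in[0,1]^{\Nzero}\). I will construct a Markov (history-independent) nonstationary randomized policy that, at each time \(t\), selects \(a_1\) with probability \(x_t\) and \(a_0\) with probability \(1-x_t\), whatever the past history. Under this policy the marginal law of \(A_t\) is exactly Bernoulli\((x_t)\), so \(g_t^\pi=x_t\) for every \(t\). This policy belongs to the class of arbitrary nonstationary randomized policies, which establishes equality of the two sets.

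There is no real obstacle. The only point needing care is to say explicitly that the admissible class includes policies whose randomization probabilities depend on \(t\). I will also remark that the first direction holds for every policy in the class, including history-dependent ones, so enlarging the class cannot produce streams outside the cube. Equality therefore holds for any policy class between Markov randomized policies and general history-dependent randomized policies.
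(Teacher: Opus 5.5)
Your proof is correct and is the paper's argument. The reverse inclusion uses the same time-dependent randomization, choosing \(a_1\) with probability \(x_t\), and the forward inclusion is the same observation that every expected stage reward lies in \([0,1]\); your added remarks on well-definedness of \(\mathbb{P}^\pi\) and on which policy classes give equality are harmless refinements.
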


\proof{Proof.}
Let \(x=(x_0,x_1,\ldots)\in[0,1]^{\Nzero}\). At time \(t\), choose \(a_1\) with probability \(x_t\) and \(a_0\) with probability \(1-x_t\). Since the state self-loops and the stage rewards are \(0\) and \(1\),
\[
\mathbb{E}[r_t]=x_t.
\]
Thus every \(x\in X\) is policy-generated. Conversely, every expected reward lies in \([0,1]\), so every generated stream lies in \(X\).
\Halmos\endproof

\begin{keyremark}
The policy class is essential. Lemma~\ref{lem:richness} is false for substantially smaller classes such as stationary randomized policies, which generate only constant expected reward streams in this one-state model.
\end{keyremark}

\begin{keylemma}[Finite-MDP realization of the one-step contexts]
Every context \(B\in\BX\) in \eqref{eq:context} is realizable by a finite MDP under nonstationary randomized policies. More precisely, for fixed \(B\), the same finite MDP and the same one-step branching mechanism can be used for every variable continuation \(x\in X\); only the policy on the variable branch changes with \(x\).
\end{keylemma}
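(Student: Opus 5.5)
The plan is an explicit construction that glues copies of the one-state self-loop MDP of Lemma~\ref{lem:richness} behind a single initial branching state. Fix a context $B[x]=c\oplus\bigl(p\,x+\sum_{i=1}^m p_i y^i\bigr)$ with $c\in[0,1]$, $y^i\in X$, and nonnegative weights $p,p_1,\ldots,p_m$ summing to one. The MDP $M_B$ will have an initial state $s_0$ and $m+1$ absorbing states $u,v_1,\ldots,v_m$. Each absorbing state is a self-loop with two actions $a_0,a_1$ and rewards $0$ and $1$, exactly as in Lemma~\ref{lem:richness}.

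At $s_0$ I will use two actions $b_0,b_1$. Both have the same transition law $P(u\mid s_0,\cdot)=p$ and $P(v_i\mid s_0,\cdot)=p_i$, with rewards $r(s_0,b_0)=0$ and $r(s_0,b_1)=1$. The policy randomizes at time $0$, choosing $b_1$ with probability $c$. This gives $\mathbb E[r(S_0,A_0)]=c$, and the branching probabilities do not depend on the randomization. Alternatively, I could use a single action with reward $c$. Since $c\in[0,1]$ is fixed with $B$, either choice is admissible, but the two-action version keeps the reward set $\{0,1\}$ uniform. The branching mechanism, that is, the transition law out of $s_0$, is therefore fixed by $B$ alone.

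For $t\ge1$ the policy is nonstationary and state-dependent. In state $v_i$ at time $t$ it plays $a_1$ with probability $y^i_{t-1}$. In state $u$ at time $t$ it plays $a_1$ with probability $x_{t-1}$. Because each of these states is absorbing, $\Pr(S_t=u)=p$ and $\Pr(S_t=v_i)=p_i$ for all $t\ge1$. Conditioning on $S_t$ then gives
\[
\mathbb E[r(S_t,A_t)]=p\,x_{t-1}+\sum_i p_i y^i_{t-1},
\]
so the expected stream is exactly $B[x]$.

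The only $x$-dependence sits in the decision rule at state $u$. Hence the MDP and the rules at $s_0$ and at the $v_i$ are the same for every $x\in X$, which is the "more precisely" clause of the statement. The degenerate cases need a brief check. If $m=0$, there are no $v_i$ states. If $p=0$, state $u$ is reached with probability zero, and the construction still works with $x$ irrelevant; this is consistent with $B[x]$ being constant in $x$. Repeated zero prefixing, $c=0$ with $p=1$, is the special case of a deterministic transition to $u$.

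I do not expect a real obstacle. The one point needing care is bookkeeping: the policy must be allowed to depend on both the time index and the current state, which is permitted in the nonstationary randomized (Markov) class. The expectation must also be computed via the law of total expectation over the absorbing branch, using the fact that absorption makes the branch occupation probabilities constant in $t$.
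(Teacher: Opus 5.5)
Your proposal is correct and follows the paper's construction: an initial state branching with probabilities $p,p_1,\ldots,p_m$ into absorbing two-action self-loop states, with the reward-one action chosen at continuation time $t$ (your calendar time $t+1$) with probability $x_t$ on the variable branch and $y^i_t$ on the fixed branches. The only difference is cosmetic: you offer an optional randomized two-action initial state, whereas the paper uses a single initial action with reward $c$, and either works because $c\in[0,1]$ is fixed by $B$.
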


\proof{Proof.}
Fix
\[
B[x]
=
c\oplus\left(p\,x+\sum_{i=1}^m p_i y^i\right).
\]
Construct an initial state \(s_\star\) with a single action of reward \(c\). After that action, move to branch state \(s_0\) with probability \(p\) and to branch state \(s_i\) with probability \(p_i\), \(1\le i\le m\). At every branch state use two self-loop actions with rewards \(0\) and \(1\). On branch \(s_0\), at continuation time \(t\) choose the reward-one action with probability \(x_t\); on branch \(s_i\), choose it with probability \(y^i_t\). These are valid nonstationary randomized policies by Lemma~\ref{lem:richness}. The expected reward at the initial stage is \(c\), while at continuation time \(t\) it is
\[
p x_t+\sum_{i=1}^m p_i y^i_t.
\]
Hence the expected stage-reward stream is exactly \(B[x]\). The MDP and its first-step transition probabilities depend only on the fixed context \(B\), not on the variable continuation \(x\).
\Halmos\endproof

This lemma makes the role of \(\BX\) concrete: the strict-separation theorem is not driven by a larger algebraic context family that is unavailable to finite control models. Under the rich policy class of Lemma~\ref{lem:richness}, the selected prefix-and-mixture contexts are themselves finite-MDP realizable. The lemma does not turn the one-hole family into a sufficient Bellman architecture; rather, it shows that failure on this necessary subfamily is already decision-theoretically meaningful.

\subsection{Topologies on the stream cube}

We use two explicit topologies on \(X\).

First, let \(\prodtop\) be the product topology. Under \(\prodtop\), \(X\) is the Hilbert cube. Second, let \(\suptop\) be the topology induced by
\[
\|x-y\|_\infty
=
\sup_{t\ge 0}|x_t-y_t|.
\]
The phrase \emph{Hilbert cube} will be reserved for \((X,\prodtop)\).

For either topology, the finite-coordinate embedding
\[
\iota_m:[0,1]^m\to X,
\qquad
\iota_m(u_0,\ldots,u_{m-1})
=
(u_0,\ldots,u_{m-1},0,0,\ldots)
\]
is continuous.

\subsection{Compatibility classes}

All classes below are relative to the selected stream class \(X\) and context family \(\BX\).

\begin{keydefinition}[Domain composability]
Let \(E:D_E\to\mathbb{R}\). We write
\[
E\in\Dclass
\]
if \(X\subseteq D_E\) and
\[
B[x]\in D_E
\]
for every \(x\in X\) and every \(B\in\BX\).
\end{keydefinition}

\begin{keyremark}
The definition bundles two requirements: coverage of the selected stream class, \(X\subseteq D_E\), and closure under the selected constructors. For the particular family \(\BX\), every context maps \(X\) back into \(X\), so in the strict-separation setting of Section~\ref{sec:separation} constructor closure follows automatically from coverage. Accordingly, the strict inclusion \(\Fclass_\tau^{\aut}\subsetneq\Dclass\) there separates finite-dimensional realizability from total evaluability on the rich cube; it is not itself a nontrivial domain-closure pathology. Constructor closure becomes substantive for partial domains in Sections~\ref{sec:summability}--\ref{sec:abel}, with literal Ramanujan summation providing the explicit prefix-closure failure.
\end{keyremark}

For a general partial evaluation, domain coverage and constructor closure together form the first gate. If either fails, some policy-generated stream or selected one-step alternative is outside the evaluation domain, so the corresponding recursive representation is not yet well posed.

\begin{keydefinition}[Finite-dimensional continuous autonomous realizability]
Fix a topology \(\tau\) on \(X\). We write
\[
E\in\Fclass_\tau^{\aut}
\]
if \(E\in\Dclass\) and there exist a finite \(k\), a \(\tau\)-continuous encoder
\[
z:X\to\mathbb{R}^k,
\]
a continuous output map
\[
o:z(X)\to\mathbb{R},
\]
and, for every \(B\in\BX\), a continuous update
\[
T_B:z(X)\to z(X)
\]
such that
\[
z(B[x])=T_B(z(x))
\]
and
\[
E(x)=o(z(x))
\]
for every \(x\in X\).
\end{keydefinition}

The adjective \emph{autonomous} means that the continuation state is \(z(x)\), a function of the continuation stream itself. No calendar time or other exogenous coordinate is supplied to the state update. No continuity, measurability, or finite parameterization is imposed across the context index \(B\): each admissible context may have its own continuous update \(T_B\). Thus \(\Fclass_\tau^{\aut}\) is intentionally permissive as a necessary realization notion; the non-compression result below rules out finite-dimensional continuous encoding even with arbitrary context-specific continuous updates.

Continuity is a substantive regularity assumption. Without regularity, the identity $|X|=|\mathbb R|$ permits a purely set-theoretic injection of the entire stream cube into one real coordinate, and context updates can be transported through that encoding. Such encodings do not represent finite-memory compression in the topological sense considered here; the topology is therefore part of the realization model.

\begin{keydefinition}[Continuous scalar sufficiency]
We write
\[
E\in\Sclass_\tau^{\aut}
\]
if \(E\in\Fclass_\tau^{\aut}\) and the realization may be chosen with the scalar evaluation itself as state. Equivalently, \(E\) is \(\tau\)-continuous and for every \(B\in\BX\) there exists a continuous map
\[
W_B:E(X)\to\mathbb{R}
\]
such that
\[
E(B[x])=W_B(E(x))
\]
for every \(x\in X\).
\end{keydefinition}

\begin{keydefinition}[Contextual scalar order consistency]
We write
\[
E\in\Oclass_\tau^{\aut}
\]
if \(E\in\Sclass_\tau^{\aut}\) and each scalar context map \(W_B\) is nondecreasing. Equivalently,
\[
E(x)\le E(y)
\quad\Longrightarrow\quad
E(B[x])\le E(B[y])
\]
for all \(x,y\in X\) and \(B\in\BX\).
\end{keydefinition}

\begin{keydefinition}[Stagewise monotonicity]
Independently of the preceding chain, write
\[
E\in\Mclass
\]
if \(X\subseteq D_E\) and coordinatewise reward dominance on \(X\) implies value dominance:
\[
x,y\in X,\quad x_t\le y_t\ \ \forall t
\quad\Longrightarrow\quad
E(x)\le E(y).
\]
\end{keydefinition}

\begin{table}[t]
\TABLE
{Roadmap of the principal examples. ``Finite aut.'' denotes exact finite-dimensional continuous autonomous realization. Ramanujan and Abel use their own domains, so the table is descriptive rather than a common class-membership theorem.\label{tab:compatibility-roadmap}}
{\scriptsize
\begin{tabular}{@{}lccccc@{}}
\hline
Criterion & Domain behavior & Finite aut. & Scalar & Order & Monotone \\
\hline
Alternating geometric $E_{\pm}$ & total & pass & pass & fail & fail \\
Quasi-hyperbolic $E_{\beta,\delta}$ & total & pass & fail & -- & pass \\
Vanishing-ratio weighted $E_w$ & total & fail & -- & -- & pass \\
Negative geometric $E_-$ & total & pass & pass & pass & fail \\
Literal Candelpergher Ramanujan & prefix fail & -- & fail & -- & fail \\
Abel on $D_A$ & partial/closed & -- & alg.\ pass & alg.\ pass & pass \\
\hline
\end{tabular}}
{\scriptsize Notes: For Ramanujan, ``prefix fail'' is the controlled domain obstruction of Theorem~\ref{thm:ram-domain-obstruction}, and the scalar failure refers to Proposition~\ref{prop:ram-scalar-insufficiency} within $O^\pi_{\mathbb R}$. For Abel, ``partial/closed'' means that the finite-valued domain is proper but closed under the prefix and finite-mixture operations used in Proposition~\ref{prop:abel-compatibility}; ``alg.\ pass'' does not assert membership in the product-topology continuous classes.}
\end{table}

\paragraph{Architectural reading of the gates.}
The compatibility classes are definitions, not an additional characterization theorem. They should be read as nested necessary conditions for progressively more restrictive recursive-evaluation architectures. If all selected continuations must remain evaluable, the domain must cover the selected stream class and survive the selected constructors. If evaluation is propagated through a fixed finite-dimensional continuous autonomous state, the criterion must lie in \(\Fclass_\tau^{\aut}\); if that state is required to be the scalar \(E(x)\), it must lie in \(\Sclass_\tau^{\aut}\); and if common contexts must preserve the scalar continuation ranking, it must lie in \(\Oclass_\tau^{\aut}\). In particular, contextual order consistency is \emph{not} required for exact policy evaluation itself: \(\Oclass_\tau^{\aut}\) is the additional gate relevant when the scalar evaluation is also intended to carry a context-stable ranking of continuations.

These gates form \emph{Layer I}: structural well-posedness of recursive policy evaluation under the stated architecture. They are not sufficient for \emph{Layer II} optimal-control conclusions such as existence of optimal policies, stationary optimality, contraction, fixed-point uniqueness, or value-iteration convergence. Those questions require additional assumptions even after the relevant compatibility gate has been passed; recent infinite-stream MDP work independently shows that specifying a stream evaluation does not by itself settle optimal-strategy existence \citep{flesch2026finitelyadditive}.

By construction,
\[
\Oclass_\tau^{\aut}
\subseteq
\Sclass_\tau^{\aut}
\subseteq
\Fclass_\tau^{\aut}
\subseteq
\Dclass.
\]
The next section proves that every inclusion is strict and that stagewise monotonicity is independent of the structural chain in both directions needed below.

\section{Strict autonomous compatibility separations}
\label{sec:separation}

We next separate total evaluability on the selected rich cube, finite-dimensional autonomous realizability, scalar sufficiency, and contextual order consistency. On \(X\), the constructor-closure part of the first gate is automatic because every context in \(\BX\) maps \(X\) into itself. Accordingly, the final strict inclusion below is a coverage-versus-realizability separation; the genuinely nontrivial constructor-closure obstruction appears later in Theorem~\ref{thm:ram-domain-obstruction}.

\begin{theorem}[Strict autonomous compatibility separation]
\label{thm:strict-chain}
Let
\[
X=[0,1]^{\Nzero},
\]
and let \(\tau\in\{\prodtop,\suptop\}\). Then
\begin{equation}
\label{eq:strict-chain}
\Oclass_\tau^{\aut}
\subsetneq
\Sclass_\tau^{\aut}
\subsetneq
\Fclass_\tau^{\aut}
\subsetneq
\Dclass.
\end{equation}
Moreover,
\[
\Mclass\not\Rightarrow\Fclass_\tau^{\aut},
\qquad
\Oclass_\tau^{\aut}\not\Rightarrow\Mclass.
\]
\end{theorem}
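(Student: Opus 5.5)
The plan is to prove each strict inclusion with an explicit witness taken from the roadmap in Table~\ref{tab:compatibility-roadmap}, and to use the weighted Hankel lower bound for the separation from $\Dclass$. All witnesses will be weighted sums $E_w(x)=\sum_n w_n x_n$ with $w\in\ell^1$. Each such functional is defined on all of $\mathbb R^{\Nzero}$ restricted to bounded streams, so $X\subseteq D_E$. Since every context in $\BX$ maps $X$ into $X$, every witness lies in $\Dclass$.

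Two general facts will be used repeatedly. First, on $X$ the map $E_w$ is $\suptop$-continuous because $|E_w(x)-E_w(y)|\le\|w\|_1\|x-y\|_\infty$. It is also $\prodtop$-continuous, because the tail sum is bounded by $\sum_{n>N}|w_n|$ uniformly on $X$. Second, for a context $B[x]=c\oplus(px+\sum_i p_iy^i)$ we have
\[
E_w(B[x])=w_0c+p\,E_{Sw}(x)+\sum_i p_iE_{Sw}(y^i).
\]

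\textbf{The three inner inclusions.}
\begin{itemize}
\item For $\Oclass\subsetneq\Sclass$ and $\Oclass\not\Rightarrow\Mclass$, take geometric weights $w_n=\gamma^n$ with $\gamma\in(-1,0)$. Then $Sw=\gamma w$, so $E(B[x])=c+p\gamma E(x)+\mathrm{const}$.
\begin{itemize}
\item Taking negative $\gamma$ for the negative geometric $E_-$, the scalar map $W_B$ is affine and $E_-$ lies in $\Sclass$. Whether it is nondecreasing depends on the sign of $p\gamma$, and $p\gamma<0$ gives a decreasing $W_B$. I will therefore check the intended convention. If $E_-$ really passes order consistency as the table claims, it must be the witness for $\Oclass\not\Rightarrow\Mclass$. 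Failing that, I would pick the alternative sign convention $w_n=(-1)^n\gamma^n$, or the mean-zero variant, so that the order inequality holds.
\item Monotonicity failure is immediate for any weight with a negative entry: raise the corresponding coordinate.
\item For strictness of $\Oclass\subsetneq\Sclass$, use the alternating geometric $E_\pm$. It is scalar-sufficient by the same affine identity. It fails order consistency via a context with $p>0$ that multiplies by a negative ratio: pick $x,y$ with $E(x)<E(y)$, and then $E(B[x])>E(B[y])$.
\end{itemize}
\item For $\Sclass\subsetneq\Fclass$, use the quasi-hyperbolic weights $w_0=1$ and $w_n=\beta\delta^n$ for $n\ge1$, with $\beta\ne1$. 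These have $r_w=2$, so the two-dimensional state $(x_0,\sum_{n\ge1}\delta^n x_n)$ is updated affinely, which gives membership in $\Fclass$. For scalar failure, fix the context $B=0\oplus x$ (the map $J$). Choose $x,y$ with $E(x)=E(y)$ but different $E(Jx)=\beta\delta\sum_n\delta^n x_n$. For example, trade first-coordinate mass against tail mass, which is possible since $\beta\ne1$. Then no function $W_J$ can exist.
\end{itemize}

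\textbf{The outer separations.} For $\Fclass\subsetneq\Dclass$ and $\Mclass\not\Rightarrow\Fclass$, take a nonnegative vanishing-ratio weight with infinite shift rank, for instance $w_n=1/n!$ normalized, or $w_n=2^{-n^2}$. Nonnegativity gives $\Mclass$ and totality gives $\Dclass$. Infinite $r_w$ is checked directly: a finite linear recurrence $w_{n+k}=\sum_j a_jw_{n+j}$ with $w_{n+1}/w_n\to0$ forces the lowest nonzero coefficient relation to fail asymptotically after dividing by $w_n$. The lower bound of Theorem~\ref{thm:weighted-hankel-characterization} then gives $k^{\mathrm{cont,aut}}_{\min}=\infty$, so $E_w\notin\Fclass_\tau^{\aut}$.

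\textbf{Main obstacle.} That lower bound is stated later in the paper. If I must avoid forward reference, I would inline its core argument. Pick $k+1$ independent shifted functionals $E_{S^{i}w}$ and realize them via contexts built from zero prefixing; continuity of the encoder plus exact autonomy then yields a continuous injection of a $(k+1)$-cube into $\mathbb R^k$, which contradicts invariance of domain. Making this injection argument rigorous is the step I expect to be the main obstacle, since the finite-coordinate embedding $\iota_m$ must be used carefully so that distinct residual statistics force distinct encoder values.
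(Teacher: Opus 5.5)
Most of your plan matches the paper's proof. You use $E_\pm$ for the first strict inclusion and quasi-hyperbolic weights for the second. Your state $(x_0,\sum_{n\ge1}\delta^nx_n)$ is a valid alternative to the paper's $(E_{\beta,\delta},U_\delta)$: it spans the same two-dimensional space of weights and updates affinely. Your zero-prefix trade is the paper's witness $x=(\beta\delta,0,\ldots)$, $y=(0,1,0,\ldots)$. For $\Fclass_\tau^{\aut}\subsetneq\Dclass$ and $\Mclass\not\Rightarrow\Fclass_\tau^{\aut}$ you use a positive vanishing-ratio weight plus the weighted Hankel lower bound. The paper's proof also just invokes that theorem, so the forward reference is not an obstacle. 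If you do inline it, varying only the $k+1$ coordinates of a nonsingular $(k+1)\times(k+1)$ minor of the shifted-weight matrix already gives the continuous injective cube. In the infinite-rank check, divide by $w_{n+j_*}$, where $j_*$ is the lowest index with a nonzero coefficient (equivalently, reindex). Dividing literally by $w_n$ sends every term to zero and forces nothing.

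The genuine gap is $\Oclass_\tau^{\aut}\not\Rightarrow\Mclass$: you never exhibit a functional in $\Oclass_\tau^{\aut}\setminus\Mclass$.
\begin{itemize}
\item Your candidate $w_n=\gamma^n$ with $\gamma\in(-1,0)$ is exactly $E_\pm$, which you yourself use as the example \emph{outside} $\Oclass_\tau^{\aut}$.
\item The fallback $(-1)^n\gamma^n=|\gamma|^n$ is ordinary positive discounting. It lies in $\Oclass_\tau^{\aut}$ but also in $\Mclass$, so it separates nothing.
\item The ``mean-zero variant'' is never specified.
\end{itemize}

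The missing observation is that the two properties are controlled by different signs. When $Sw=\lambda w$, the coefficient of $E_w(x)$ in $E_w(B[x])=w_0c+p\,E_{Sw}(x)+\cdots$ is $p\lambda$. So contextual order consistency depends only on the sign of the shift ratio $\lambda$, whereas stagewise monotonicity depends on the signs of the weights $w_n$ themselves. You therefore need $\lambda>0$ together with a negative weight. That is the global sign flip of ordinary discounting, $E_-(x)=-\sum_t\gamma^tx_t$ with $0<\gamma<1$.

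Then $E_-(B[x])=-c+\gamma p\,E_-(x)+\gamma\sum_ip_iE_-(y^i)$ is nondecreasing in $E_-(x)$, and continuity follows from the $\ell^1$ estimate, so $E_-\in\Oclass_\tau^{\aut}$. On the other hand, $0\le(1,0,0,\ldots)$ coordinatewise while $E_-(0)=0>-1=E_-(1,0,0,\ldots)$, so $E_-\notin\Mclass$. With this witness, which is the paper's, your argument is complete.
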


The proof uses four explicit criteria. We first record a common continuity observation.

\begin{keylemma}[Absolutely summable weights]
Let \(w=(w_t)_{t\ge0}\in\ell^1\) and define
\[
E_w(x)=\sum_{t=0}^\infty w_t x_t,
\qquad
x\in X.
\]
Then \(E_w\) is continuous under both \(\prodtop\) and \(\suptop\).
\end{keylemma}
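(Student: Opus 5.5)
Since the sup-norm topology \(\suptop\) is finer than the product topology \(\prodtop\) on \(X\), every \(\prodtop\)-continuous function is automatically \(\suptop\)-continuous. The plan is therefore to prove continuity of \(E_w\) under \(\prodtop\) and deduce the sup-norm case. Before that, note that \(E_w\) is well defined and finite on all of \(X\): for \(x\in X\) we have \(|w_tx_t|\le|w_t|\), so the series converges absolutely, with \(|E_w(x)|\le\|w\|_1\).

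For product-topology continuity, I will use a uniform-limit argument. Define the partial sums
\[
E_w^{(N)}(x)=\sum_{t=0}^{N-1}w_tx_t .
\]
Each \(E_w^{(N)}\) depends on only finitely many coordinates and is a finite linear combination of coordinate projections. Coordinate projections are \(\prodtop\)-continuous, so each \(E_w^{(N)}\) is \(\prodtop\)-continuous.

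The convergence is uniform on \(X\), since
\[
\sup_{x\in X}\bigl|E_w(x)-E_w^{(N)}(x)\bigr|\le\sum_{t\ge N}|w_t|\to0 .
\]
A uniform limit of continuous real-valued functions on a topological space is continuous. Hence \(E_w\) is \(\prodtop\)-continuous.

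If a direct \(\varepsilon\)-argument is preferred, fix \(x\in X\) and \(\varepsilon>0\). Choose \(N\) with \(\sum_{t\ge N}|w_t|<\varepsilon/3\), and take the basic \(\prodtop\)-neighborhood
\[
U=\{y\in X:\ |y_t-x_t|<\delta\ \text{for } t<N\},
\qquad \delta=\varepsilon/(3(\|w\|_1+1)).
\]
For \(y\in U\), split the difference into head and tails:
\[
|E_w(y)-E_w(x)|\le\delta\|w\|_1+2\sum_{t\ge N}|w_t|<\varepsilon .
\]
Both tails are bounded using \(|x_t|,|y_t|\le1\) on \(X\).

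For the sup-norm case there is also a direct Lipschitz bound:
\[
|E_w(x)-E_w(y)|\le\|w\|_1\|x-y\|_\infty .
\]
I would state this in any case, because it is a cleaner fact for later use.

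There is no serious obstacle. The only point needing care is that boundedness of \(X\) (coordinates in \([0,1]\)) is what makes the tail estimate uniform. On unbounded streams \(E_w\) would fail to be \(\prodtop\)-continuous, so the argument must invoke \(X\subseteq[0,1]^{\Nzero}\) explicitly.
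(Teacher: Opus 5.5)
Your proof is correct and follows the paper's own argument: the Lipschitz estimate $|E_w(x)-E_w(y)|\le\|w\|_1\|x-y\|_\infty$ handles $\suptop$, and for $\prodtop$ the finite partial sums are continuous and converge uniformly, with tail bound $\sum_{t\ge N}|w_t|$ supplied by the boundedness of $X$. Your observation that $\suptop$ is finer than $\prodtop$, so the sup-norm case already follows from the product case, is also valid; your closing aside is slightly overstated, though, because for finitely supported $w$ continuity does survive on unbounded streams.
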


\proof{Proof.}
For the sup-norm topology,
\[
|E_w(x)-E_w(y)|
\le
\|w\|_1\|x-y\|_\infty.
\]
For the product topology, the finite-coordinate partial sums
\[
E_w^{(N)}(x)=\sum_{t=0}^N w_t x_t
\]
are continuous, while
\[
\sup_{x\in X}|E_w(x)-E_w^{(N)}(x)|
\le
\sum_{t>N}|w_t|
\longrightarrow 0.
\]
Thus \(E_w\) is a uniform limit of continuous functions.
\Halmos\endproof

\subsection{Scalar sufficiency need not preserve continuation order}

Fix \(0<\gamma<1\) and define
\[
E_{\pm}(x)
=
\sum_{t=0}^{\infty}(-\gamma)^t x_t.
\]

\begin{keyproposition}
For either \(\tau\in\{\prodtop,\suptop\}\),
\[
E_{\pm}
\in
\Sclass_\tau^{\aut}
\setminus
\Oclass_\tau^{\aut}.
\]
\end{keyproposition}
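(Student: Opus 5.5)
We prove membership in \(\Sclass_\tau^{\aut}\) by writing down the scalar context maps explicitly, and non-membership in \(\Oclass_\tau^{\aut}\) by exhibiting two continuations whose order is reversed by zero prefixing.

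\emph{Continuity and domain.} Since \(w_t=(-\gamma)^t\) lies in \(\ell^1\), the preceding lemma on absolutely summable weights shows that \(E_{\pm}\) is continuous for both \(\prodtop\) and \(\suptop\). Its domain contains \(X\), and every context in \(\BX\) maps \(X\) into \(X\). Hence \(E_{\pm}\in\Dclass\).

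\emph{Scalar context maps.} Fix a context \(B[x]=c\oplus\bigl(p\,x+\sum_i p_i y^i\bigr)\). By linearity and absolute convergence,
\[
E_{\pm}(B[x])=c+\sum_{t\ge0}(-\gamma)^{t+1}\Bigl(p x_t+\sum_i p_i y^i_t\Bigr)
= c-\gamma p\,E_{\pm}(x)-\gamma\sum_i p_i E_{\pm}(y^i).
\]
So the map
\[
W_B(v)=c-\gamma\sum_i p_iE_{\pm}(y^i)-\gamma p\,v
\]
is affine and hence continuous on \(E_{\pm}(X)\), and it satisfies \(E_{\pm}(B[x])=W_B(E_{\pm}(x))\). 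This is the equivalent description of \(\Sclass_\tau^{\aut}\). If one wants the underlying \(\Fclass_\tau^{\aut}\) data explicitly, take \(k=1\), \(z=E_{\pm}\), \(o=\mathrm{id}\), and \(T_B=W_B\). We must then check that \(T_B\) maps \(z(X)\) into \(z(X)\). This holds because \(W_B(z(x))=z(B[x])\) and \(B[x]\in X\).

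\emph{Failure of order consistency.} The affine map \(W_B\) has slope \(-\gamma p\), so it is strictly decreasing whenever \(p>0\). Take the zero-prefix context \(B=J\), that is, \(c=0\), \(p=1\), \(m=0\), and compare \(x=0\) with \(y=e_0=(1,0,0,\ldots)\). Then
\[
E_{\pm}(x)=0<1=E_{\pm}(y),
\]
but
\[
E_{\pm}(Jx)=0>-\gamma=E_{\pm}(Jy).
\]
So the implication \(E(x)\le E(y)\Rightarrow E(B[x])\le E(B[y])\) fails, and \(E_{\pm}\notin\Oclass_\tau^{\aut}\).

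None of these steps is a real obstacle. The one point needing care is the check, made in the scalar-maps step, that \(T_B\) maps \(z(X)\) into itself, which uses that every context in \(\BX\) preserves \(X\).
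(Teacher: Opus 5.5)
Your proposal is correct and follows the paper's proof: continuity from the absolute-summability lemma, the affine scalar update $E_{\pm}(B[x])=c-\gamma p\,E_{\pm}(x)-\gamma\sum_i p_iE_{\pm}(y^i)$, and the zero-prefix counterexample with $x=0$ and $y=(1,0,0,\ldots)$. Your extra check that $T_B$ maps $z(X)$ into itself is a valid detail that the paper leaves implicit.
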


\proof{Proof.}
Continuity follows from the absolute-summability lemma above. For a context
\[
B[x]
=
c\oplus
\left(
p\,x+\sum_i p_i y^i
\right),
\]
linearity gives
\[
E_{\pm}(B[x])
=
c-\gamma p\,E_{\pm}(x)
-\gamma\sum_i p_iE_{\pm}(y^i).
\]
For fixed \(B\), this is a continuous function of the single scalar \(E_{\pm}(x)\), so scalar sufficiency holds.

Contextual order consistency fails whenever \(p>0\), because the coefficient of \(E_{\pm}(x)\) is negative. For example, let \(x=0\), \(y=(1,0,0,\ldots)\), and take the zero-prefix context \(J\). Then
\[
E_{\pm}(x)=0<1=E_{\pm}(y),
\]
but
\[
E_{\pm}(Jx)=0>-\gamma=E_{\pm}(Jy).
\]
\Halmos\endproof

This establishes
\[
\Oclass_\tau^{\aut}
\subsetneq
\Sclass_\tau^{\aut}.
\]

\subsection{Finite-dimensional sufficiency need not reduce to one scalar}

Fix \(0<\beta<1\) and \(0<\delta<1\), and define the quasi-hyperbolic criterion
\[
E_{\beta,\delta}(x)
=
x_0
+
\beta\sum_{t=1}^{\infty}\delta^t x_t.
\]
Also define the geometric continuation statistic
\[
U_\delta(x)
=
\sum_{t=0}^{\infty}\delta^t x_t.
\]

\begin{keyproposition}
For either \(\tau\in\{\prodtop,\suptop\}\),
\[
E_{\beta,\delta}
\in
\Fclass_\tau^{\aut}
\setminus
\Sclass_\tau^{\aut}.
\]
\end{keyproposition}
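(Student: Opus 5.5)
Membership in \(\Fclass_\tau^{\aut}\) follows from the two-dimensional state \(z(x)=(E_{\beta,\delta}(x),U_\delta(x))\); non-membership in \(\Sclass_\tau^{\aut}\) follows from two streams with equal \(E_{\beta,\delta}\) but different \(E_{\beta,\delta}\) after zero prefixing.

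\textbf{Membership.} Both coordinates of \(z\) are of the form \(E_w\) with \(w\in\ell^1\). Hence \(z\) is \(\tau\)-continuous for both topologies by the absolute-summability lemma, and \(E_{\beta,\delta}\) is continuous as well. Trivially \(X\subseteq D_E\), and every context maps \(X\) into \(X\), so \(E_{\beta,\delta}\in\Dclass\). The output map is the first coordinate projection \(o(e,u)=e\).

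For the updates, fix \(B[x]=c\oplus\bigl(p\,x+\sum_i p_i y^i\bigr)\) and set \(v=p\,x+\sum_ip_iy^i\). Linearity gives
\[
U_\delta(B[x])=c+\delta U_\delta(v),
\qquad
E_{\beta,\delta}(B[x])=c+\beta\delta U_\delta(v),
\]
with \(U_\delta(v)=p\,U_\delta(x)+\sum_ip_iU_\delta(y^i)\). So
\[
T_B(e,u)=\Bigl(c+\beta\delta\bigl(pu+\textstyle\sum_ip_iU_\delta(y^i)\bigr),\;c+\delta\bigl(pu+\textstyle\sum_ip_iU_\delta(y^i)\bigr)\Bigr)
\]
is affine, hence continuous. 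It is defined on all of \(\mathbb R^2\), so restricting it to \(z(X)\) is fine. Moreover \(T_B(z(x))=z(B[x])\in z(X)\) because \(B[x]\in X\), so \(T_B\) maps \(z(X)\) into \(z(X)\). This gives \(k=2\) and \(E_{\beta,\delta}\in\Fclass_\tau^{\aut}\).

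\textbf{Non-membership.} Suppose \(E_{\beta,\delta}\in\Sclass_\tau^{\aut}\). Then for the zero-prefix context \(J\) there is a map \(W_J\) with \(E_{\beta,\delta}(Jx)=W_J(E_{\beta,\delta}(x))\). So it suffices to find \(x,y\in X\) with
\[
E_{\beta,\delta}(x)=E_{\beta,\delta}(y)
\quad\text{but}\quad
E_{\beta,\delta}(Jx)\ne E_{\beta,\delta}(Jy).
\]
From the formula above, \(E_{\beta,\delta}(Jx)=\beta\delta U_\delta(x)\).

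Take \(x=(1,0,0,\ldots)\), so \(E_{\beta,\delta}(x)=1\) and \(U_\delta(x)=1\). Take \(y=(0,a,0,\ldots)\) with \(a=1/(\beta\delta)\) if \(\beta\delta\ge1\); since \(\beta,\delta<1\) this case never occurs, and such a \(y\) would leave \(X\) anyway. Instead, scale both streams: let
\[
x=(\beta\delta,0,0,\ldots),\qquad y=(0,1,0,\ldots).
\]
Both lie in \(X\), and \(E_{\beta,\delta}(x)=\beta\delta=E_{\beta,\delta}(y)\). However \(U_\delta(x)=\beta\delta\) while \(U_\delta(y)=\delta\), and these differ because \(\beta<1\). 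Hence \(E_{\beta,\delta}(Jx)\ne E_{\beta,\delta}(Jy)\), a contradiction. This failure holds for any \(W_J\), with or without continuity, so it applies to both topologies.

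The only point needing care is picking the counterexample inside \([0,1]^{\Nzero}\); the scaling above handles it. Everything else is linear bookkeeping.
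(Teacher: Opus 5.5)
Your proof is correct and follows the paper's argument: the same two-dimensional affine realization $z=(E_{\beta,\delta},U_\delta)$ for membership, and the same witnesses $x=(\beta\delta,0,0,\ldots)$, $y=(0,1,0,\ldots)$ under the zero-prefix context $J$ for non-membership. The aborted digression about $x=(1,0,0,\ldots)$ and $a=1/(\beta\delta)$ is muddled, since $\beta\delta<1$ means that $y$ always leaves $X$, so it should simply be deleted.
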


\proof{Proof.}
Both \(E_{\beta,\delta}\) and \(U_\delta\) are continuous by the absolute-summability lemma above. Consider the two-dimensional state
\[
z(x)
=
\begin{pmatrix}
E_{\beta,\delta}(x)\\
U_\delta(x)
\end{pmatrix}.
\]
For
\[
B[x]
=
c\oplus
\left(
p\,x+\sum_i p_i y^i
\right),
\]
let
\[
C_B
=
\sum_i p_iU_\delta(y^i).
\]
Then
\[
U_\delta(B[x])
=
c+\delta\bigl(p\,U_\delta(x)+C_B\bigr),
\]
and
\[
E_{\beta,\delta}(B[x])
=
c+\beta\delta\bigl(p\,U_\delta(x)+C_B\bigr).
\]
Hence \(z(B[x])\) is a continuous affine function of \(z(x)\); in fact, only the second state coordinate is needed for the update. This gives an exact two-dimensional continuous autonomous realization.

To show that the scalar \(E_{\beta,\delta}\) is not sufficient, take
\[
x=(\beta\delta,0,0,\ldots),
\qquad
y=(0,1,0,\ldots).
\]
Both belong to \(X\), and
\[
E_{\beta,\delta}(x)
=
E_{\beta,\delta}(y)
=
\beta\delta.
\]
After zero prefixing,
\[
E_{\beta,\delta}(Jx)
=
\beta^2\delta^2,
\qquad
E_{\beta,\delta}(Jy)
=
\beta\delta^2.
\]
These values differ because \(0<\beta<1\). Thus no scalar update based only on \(E_{\beta,\delta}(x)\) can implement the zero-prefix context.
\Halmos\endproof

Consequently,
\[
\Sclass_\tau^{\aut}
\subsetneq
\Fclass_\tau^{\aut}.
\]

\subsection{Temporal Hankel rank and sharp autonomous memory dimension}

We now identify the exact autonomous state dimension for weighted criteria. Let
\[
w=(w_n)_{n\ge0}\in\ell^1
\]
be a real weight sequence and define
\begin{equation}
\label{eq:weighted-criterion}
E_w(x)
=
\sum_{n=0}^{\infty}w_nx_n.
\end{equation}
Absolute summability makes $E_w$ finite on all of $X$ and continuous under both selected topologies. We use the coordinate left shift from Section~\ref{sec:setup} also on weights,
\[
S(w_0,w_1,w_2,\ldots)=(w_1,w_2,\ldots),
\]
and define the temporal shift space and its algebraic dimension by
\begin{equation}
\label{eq:temporal-shift-space}
\mathcal W_w
:=
\operatorname{span}\{S^iw:i\ge0\},
\qquad
r_w:=\dim\mathcal W_w\in\Nzero\cup\{\infty\}.
\end{equation}
Also define the infinite temporal Hankel matrix
\begin{equation}
\label{eq:temporal-hankel}
H_w=(w_{i+j})_{i,j\ge0}.
\end{equation}
Here $\operatorname{rank}_{\mathrm{alg}}H_w$ denotes algebraic row rank, equivalently the supremum of the ranks of its finite submatrices.

\begin{lemma}[Temporal shift rank equals temporal Hankel rank]
\label{lem:weighted-hankel-rank}
For every $w\in\ell^1$,
\[
r_w
=
\operatorname{rank}_{\mathrm{alg}}H_w.
\]
\end{lemma}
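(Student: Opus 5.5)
The plan is to observe that the rows of $H_w$ are exactly the shifted sequences. The $i$-th row of $H_w$ is $(w_{i+j})_{j\ge0}=S^iw$. Hence the row space of $H_w$ is, by definition, $\mathcal W_w=\operatorname{span}\{S^iw:i\ge0\}$. The algebraic row rank is the dimension of this span, so it equals $r_w$ when the row rank is read literally as the dimension of the span of the rows. Finite or infinite, both sides are then the same cardinal in $\Nzero\cup\{\infty\}$.

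The only real content is the paper's alternative description of $\operatorname{rank}_{\mathrm{alg}}H_w$ as the supremum of ranks of finite submatrices. I would therefore prove that
\[
\dim\operatorname{span}\{S^iw\}=\sup\{\operatorname{rank}(w_{i+j})_{i\in I,j\in J}: I,J\subset\Nzero \text{ finite}\}.
\]

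For $\ge$: take finite $I,J$. The rank of the $I\times J$ submatrix equals the dimension of the span of its rows, which are the restrictions of $S^iw$ ($i\in I$) to the coordinates $J$. Restriction is linear, so this dimension is at most $\dim\operatorname{span}\{S^iw:i\in I\}\le r_w$.

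For $\le$: suppose $S^{i_1}w,\dots,S^{i_m}w$ are linearly independent sequences. I need a finite $J$ on which their restrictions remain independent. Consider the descending chain of subspaces $K_N=\{c\in\mathbb R^m:\sum_l c_l (S^{i_l}w)_j=0 \text{ for } j\le N\}$. Their intersection is $\{0\}$ by independence. Since they are subspaces of the finite-dimensional space $\mathbb R^m$, the chain stabilizes, so some $K_N=\{0\}$. With $J=\{0,\dots,N\}$ and $I=\{i_1,\dots,i_m\}$, the submatrix then has rank $m$. Taking suprema over $m$ handles both the finite and the infinite case: if $r_w=\infty$, arbitrarily large independent families exist, giving unbounded finite-submatrix ranks.

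The main obstacle is just this stabilization step, which is routine; no $\ell^1$ hypothesis is needed, as the argument is purely algebraic.
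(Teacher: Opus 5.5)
Your proof is correct and matches the paper's. The paper's entire argument is your first paragraph: the $i$th row of $H_w$ is $S^iw$, so the row span of $H_w$ is $\mathcal W_w$.

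Your additional verification that this dimension equals the supremum of finite-submatrix ranks is also correct; it goes through the stabilization of the descending chain $K_N$. The paper states that equivalence without proof. It then uses exactly this fact in the lower bound of the sharp Hankel characterization, where it extracts coordinates $n_1,\dots,n_{k+1}$ making the $(k+1)\times(k+1)$ matrix $M$ nonsingular.
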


\proof{Proof.}
The $i$th row of $H_w$ is exactly
\[
(w_i,w_{i+1},\ldots)=S^iw.
\]
Hence the algebraic row span of $H_w$ is $\mathcal W_w$.
\Halmos\endproof

The algebraic backbone is classical. In one variable, finite Hankel rank is equivalent to a finite constant-coefficient linear recurrence and to rationality of the corresponding generating series; formal-series realization theory also identifies Hankel rank with minimal linear-representation dimension \citep{schutzenberger1961automata,carlylepaz1971realizations,fliess1974hankel}. Nonlinear and topological realization theory also substantially predates the present setting: Sussmann~\cite{sussmann1976minimal} establishes minimal realization results for analytic-manifold nonlinear systems, Ehrig and K\"uhnel~\cite{ehrig1974topological} develop reduction and minimization for deterministic topological automata with continuous state transitions, and Sontag~\cite{sontag1979realization} treats bounded discrete-time nonlinear responses and state-affine/state-linear realizations. Current structured realization theory continues this line \citep{petreczky2026stability}. Rank-type lower bounds also occur in structured nonlinear realization theory: Petreczky and Vidal~\cite{petreczky2008semialgebraic} define an \emph{algebraic Hankel rank} through Krull dimensions of algebras generated by shifted output time series and obtain state-dimension lower bounds for semi-algebraic and implicit-polynomial systems. These results use different invariants, regularity classes, or notions of minimality from the ordinary temporal row rank and Euclidean continuity-only competitor class considered below. The theorem is therefore stated only for the bounded continuation-stream architecture defined in Section~\ref{sec:setup}; it is not a general nonlinear-realization claim.

\begin{theorem}[Sharp temporal-Hankel characterization of continuous autonomous memory]
\label{thm:weighted-hankel-characterization}
Let $w\in\ell^1$ and let $E_w$ be given by \eqref{eq:weighted-criterion}. For either $\tau\in\{\prodtop,\suptop\}$,
\[
E_w\in\Fclass_\tau^{\aut}
\quad\Longleftrightarrow\quad
r_w<\infty.
\]
More precisely, let $k_{\min,\tau}^{\mathrm{cont,aut}}(E_w)$ be the minimum Euclidean encoder dimension among exact autonomous realizations satisfying the continuity requirements of $\Fclass_\tau^{\aut}$, with value $\infty$ if no finite-dimensional realization exists. Then
\begin{equation}
\label{eq:sharp-hankel-dimension}
k_{\min,\tau}^{\mathrm{cont,aut}}(E_w)
=
r_w
=
\operatorname{rank}_{\mathrm{alg}}H_w.
\end{equation}
For $w=0$, the equality uses the zero-dimensional singleton state $\mathbb R^0$.
\end{theorem}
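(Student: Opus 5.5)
We prove the two inequalities $k_{\min,\tau}^{\mathrm{cont,aut}}(E_w)\le r_w$ and $k_{\min,\tau}^{\mathrm{cont,aut}}(E_w)\ge r_w$ separately. The Hankel equality then follows from Lemma~\ref{lem:weighted-hankel-rank}, and the equivalence $E_w\in\Fclass_\tau^{\aut}\Leftrightarrow r_w<\infty$ follows at once. Domain membership $E_w\in\Dclass$ is automatic, since $X\subseteq D_{E_w}$ and each $B\in\BX$ maps $X$ into $X$.

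\emph{Upper bound (finite rank gives a realization).} Suppose $r=r_w<\infty$ and choose a basis $v^1,\dots,v^r$ of $\mathcal W_w$. Every element of $\mathcal W_w$ is a finite combination of shifts of $w$, so it lies in $\ell^1$. Set $z(x)=(\langle v^1,x\rangle,\dots,\langle v^r,x\rangle)$ with $\langle v,x\rangle=\sum_n v_nx_n$. By the absolute-summability lemma each coordinate is continuous for both topologies. Write $w=\sum_j a_jv^j$, so that $E_w=o\circ z$ with $o$ linear. The space $\mathcal W_w$ is shift-invariant, so $Sv^j=\sum_l A_{jl}v^l$. For a context $B[x]=c\oplus(px+\sum_ip_iy^i)$ we then get
\[
\langle v^j,B[x]\rangle=v^j_0c+\langle Sv^j,px+\textstyle\sum_ip_iy^i\rangle=v^j_0c+p\sum_lA_{jl}z_l(x)+\sum_ip_i\langle Sv^j,y^i\rangle .
\]
This is an affine map $T_B$ of $z(x)$. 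It maps $z(X)$ into $z(X)$ because $B[x]\in X$. For $w=0$ we use $\mathbb R^0$.

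\emph{Lower bound.} Suppose there is a realization $(z,o,T_B)$ in $\mathbb R^k$, and suppose $k+1$ linearly independent shifts $S^{i_0}w,\dots,S^{i_k}w$ exist. This happens whenever $r_w>k$, including $r_w=\infty$. We aim for a contradiction. First pick a finite horizon $N$ such that the truncations of these shifts to coordinates $0,\dots,N-1$ are still independent. Fix $m\ge\max_j i_j$ and consider streams of the form $J^m x$ with $x=\iota_N(u)$, $u\in[0,1]^N$. For each $i\le m$ there is a context composite $C_i$, namely a nonzero prefix followed by repeated zero prefixing. Repeated application of $T$ shows that $E_w(C_i[J^{m-?}\cdots])$ depends only on $z(\text{stream})$. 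The cleaner route is to argue directly: applying $J^{m-i}$ to $x$ and using $z(J^jx)=T_J^j(z(x))$, the value $E_w(J^{j}x)=\sum_n w_{n+j}x_n=\langle S^jw,x\rangle$ is a continuous function of $z(x)$. Hence the map $\Phi:u\mapsto(\langle S^{i_j}w,\iota_N u\rangle)_{j=0}^k\in\mathbb R^{k+1}$ factors as $F\circ z\circ\iota_N$, where $F=(o\circ T_J^{i_j})_j$ is continuous on $z(X)$. The image of $\Phi$ is a linear image of $[0,1]^N$ of full rank $k+1$, so it contains a $(k+1)$-dimensional cube $Q$. Choose a continuous section: on an affine $(k+1)$-dimensional slice of $[0,1]^N$ on which $\Phi$ is injective, $\Phi$ is an affine homeomorphism onto a convex body containing a cube. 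Composing its inverse with $z\circ\iota_N$ gives a continuous map $h:Q\to\mathbb R^k$ with $F\circ h=\mathrm{id}_Q$. In particular $h$ is a continuous injection of a $(k+1)$-cube into $\mathbb R^k\subset\mathbb R^{k+1}$. Its image has empty interior in $\mathbb R^{k+1}$, which contradicts invariance of domain. Hence $k\ge r_w$.

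\emph{Main obstacle.} The delicate step is the lower bound bookkeeping. We must check that only the zero-prefix context $J$ and continuity on $z(X)$ are needed, so that the result holds with no linearity assumption on $z$. We must also make sure the injective affine slice lies inside $[0,1]^N$ with nonempty interior; we do this by taking a small cube around an interior point. The two topologies cause no extra difficulty, because $\iota_N$ is continuous for both.
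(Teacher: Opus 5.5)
Your proposal is correct and follows essentially the paper's route: the same affine residual-coordinate encoder gives the upper bound, and the lower bound uses the same ingredients as the paper, namely a finite nonsingular minor of $k+1$ independent shifts, an embedded $(k+1)$-cube on which zero-prefix autonomy forces $z$ to be injective, and invariance of domain (your left inverse $F=(o\circ T_J^{i_j})_j$ plays the role of the paper's propagation of $z(x(t))=z(x(t'))$ through $J^i$, and its continuity is not actually needed). The only repair is presentational: delete the abandoned fragment about $J^m x$ and $C_i[J^{m-?}\cdots]$, since the argument you actually complete works directly with $x=\iota_N(u)$.
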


\proof{Proof.}
We first prove the upper bound. Suppose $r_w=m<\infty$ and choose a basis
\[
a^1,\ldots,a^m
\]
of $\mathcal W_w$. Define
\[
z:X\to\mathbb R^m,
\qquad
z_r(x)=\langle a^r,x\rangle.
\]
Each $a^r$ belongs to $\ell^1$, so every coordinate of $z$ is continuous under both $\prodtop$ and $\suptop$. Because $S\mathcal W_w\subseteq\mathcal W_w$, there is a matrix $A\in\mathbb R^{m\times m}$ such that
\[
Sa^r
=
\sum_{s=1}^m A_{rs}a^s,
\qquad r=1,\ldots,m.
\]
Fix a context
\[
B[x]
=
c\oplus\left(p\,x+\sum_i p_i y^i\right)
\in\BX.
\]
Then
\[
\begin{aligned}
z_r(B[x])
&=
a_0^r c
+
\left\langle Sa^r,
p\,x+\sum_i p_i y^i
\right\rangle\\
&=
a_0^r c
+
p\sum_{s=1}^m A_{rs}z_s(x)
+
\sum_i p_i\langle Sa^r,y^i\rangle.
\end{aligned}
\]
Thus
\[
z(B[x])=pAz(x)+b_B
\]
for a vector $b_B$ depending only on the fixed context. Restricting the affine map $u\mapsto pAu+b_B$ to $z(X)$ gives the required continuous state update. Since $w\in\mathcal W_w$, write $w=\sum_{r=1}^m\alpha_r a^r$; then
\[
E_w(x)=\alpha^\top z(x),
\]
so the output is linear. Hence $k_{\min,\tau}^{\mathrm{cont,aut}}(E_w)\le m$.

For the lower bound, suppose an exact autonomous realization has a $\tau$-continuous encoder
\[
z:X\to\mathbb R^k.
\]
Assume $r_w>k$. Choose $k+1$ linearly independent shifted weights
\[
a^r=S^{i_r}w,
\qquad
r=1,\ldots,k+1.
\]
Their infinite row matrix has row rank $k+1$, so there exist coordinate indices $n_1,\ldots,n_{k+1}$ such that
\[
M=(a^r_{n_s})_{r,s=1}^{k+1}
\]
is nonsingular. Let $Q=M^{-1}$ and define finitely supported sequences $d_1,\ldots,d_{k+1}$ by
\[
(d_s)_{n_\ell}=Q_{\ell s},
\qquad
(d_s)_n=0
\quad\text{for }n\notin\{n_1,\ldots,n_{k+1}\}.
\]
Then
\begin{equation}
\label{eq:biorthogonal-perturbations}
\langle a^r,d_s\rangle=\delta_{rs}.
\end{equation}
Set $D=\sum_{s=1}^{k+1}\|d_s\|_\infty$ and choose $0<\varepsilon<1/(2D)$. For
\[
t=(t_1,\ldots,t_{k+1})\in[-1,1]^{k+1},
\]
define
\begin{equation}
\label{eq:residual-cube-embedding}
x(t)
=
\frac12\mathbf 1
+
\varepsilon\sum_{s=1}^{k+1}t_sd_s.
\end{equation}
The choice of $\varepsilon$ gives $x(t)\in X$. Because all perturbation directions have finite support, $t\mapsto x(t)$ is continuous under both selected topologies. Moreover,
\begin{equation}
\label{eq:residual-cube-coordinates}
E_w(J^{i_r}x(t))
=
\langle S^{i_r}w,x(t)\rangle
=
\frac12\langle a^r,\mathbf1\rangle
+
\varepsilon t_r.
\end{equation}
If $z(x(t))=z(x(t'))$, exact autonomy for the zero-prefix context implies
\[
z(J^ix(t))=z(J^ix(t'))
\qquad\forall i\ge0,
\]
and exact output recovery therefore gives
\[
E_w(J^ix(t))=E_w(J^ix(t'))
\qquad\forall i\ge0.
\]
Applying \eqref{eq:residual-cube-coordinates} at $i=i_r$ yields $t_r=t'_r$ for every $r$. Hence
\[
t\longmapsto z(x(t))
\]
is a continuous injection from $[-1,1]^{k+1}$ into $\mathbb R^k$. The case $k=0$ is already impossible. For $k\ge1$, restrict to the open cube $(-1,1)^{k+1}$ and compose with the standard inclusion $\mathbb R^k\hookrightarrow\mathbb R^{k+1}$. Brouwer's invariance-of-domain theorem \citep[see, e.g.,][]{munkres2000topology} would make the image open in $\mathbb R^{k+1}$, but that image lies in a $k$-dimensional hyperplane with empty interior. This contradiction proves $r_w\le k$.

Combining the lower bound with the affine upper construction gives \eqref{eq:sharp-hankel-dimension}; if $r_w=\infty$, no finite $k$ is possible.
\Halmos\endproof

The lower bound uses continuity only for the encoder $z$; linearity, affinity, differentiability, and analyticity of a competing realization are not assumed. The stronger continuity requirements in the definition of $\Fclass_\tau^{\aut}$ are retained because they define the comparison class used throughout the paper.

\begin{corollary}[Criterion-induced memory already on a one-state MDP]
\label{cor:criterion-memory-cost}
Fix $w\in\ell^1$ and either selected topology $\tau$. In the one-state two-action self-loop MDP of Lemma~\ref{lem:richness}, under arbitrary nonstationary randomized policies, any exact $\tau$-continuous autonomous continuation encoder that supports repeated zero-reward one-step prefixing and exact recovery of $E_w$ has Euclidean dimension at least $r_w$. If $r_w<\infty$, the $r_w$-dimensional affine encoder constructed in Theorem~\ref{thm:weighted-hankel-characterization} supports the entire context family $\BX$. Hence $r_w$ is the exact endogenous continuous memory dimension of the weighted criterion for the selected clock-free continuation architecture, even when the physical MDP has only one state.
\end{corollary}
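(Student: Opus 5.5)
The plan is to obtain Corollary~\ref{cor:criterion-memory-cost} by re-reading the proof of Theorem~\ref{thm:weighted-hankel-characterization} and checking which parts of the architecture each half actually uses. By Lemma~\ref{lem:richness}, the one-state two-action self-loop MDP generates exactly the expected-stream set $X=[0,1]^{\Nzero}$ under arbitrary nonstationary randomized policies. So every stream appearing in the theorem's proof is a genuine policy-generated continuation in this model. Zero prefixing $J$ is a context in $\BX$ with $c=0$, $p=1$ and $m=0$. By the finite-MDP realization lemma, $J$ corresponds to inserting one zero-reward step before the continuation policy.

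For the lower bound, I would assume only a $\tau$-continuous encoder $z:X\to\mathbb R^k$ with three properties: $z(Jx)=T_J(z(x))$ for some map $T_J$, and $E_w(x)=o(z(x))$ for some output map $o$. Continuity of $T_J$ and $o$, and the presence of any other contexts, are not needed. The argument then proceeds in four steps.
\begin{enumerate}
\item If $r_w>k$, choose $k+1$ independent shifts $S^{i_r}w$ and biorthogonal finitely supported perturbations $d_s$.
\item Build the embedded cube $t\mapsto x(t)$ of \eqref{eq:residual-cube-embedding}. It lies in $X$ and is continuous for both topologies.
\item Iterating the $J$-update shows that $z(x(t))=z(x(t'))$ forces $E_w(J^ix(t))=E_w(J^ix(t'))$ for all $i$. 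Then \eqref{eq:residual-cube-coordinates} gives $t=t'$.
\item The map $t\mapsto z(x(t))$ is therefore a continuous injection of $[-1,1]^{k+1}$ into $\mathbb R^k$, which invariance of domain rules out.
\end{enumerate}
Hence $k\ge r_w$, and if $r_w=\infty$ no finite $k$ works.

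For the upper bound when $r_w<\infty$, I would quote the affine residual-coordinate encoder $z_r(x)=\langle a^r,x\rangle$ from the theorem. It has the updates $z(B[x])=pAz(x)+b_B$ for every $B\in\BX$ and the linear output $E_w=\alpha^\top z$. It therefore supports the whole context family, and in particular repeated zero-prefixing. Combining the two bounds gives dimension exactly $r_w$, even though the physical state space is a singleton.

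The only step needing care is the lower bound. There I must check that the restricted hypotheses still suffice. Injectivity uses only exact commutation with $J$ and exact recovery through $o$, with no continuity of $T_J$ or $o$. The topological contradiction uses only continuity of $z$ composed with the finite-support cube map.
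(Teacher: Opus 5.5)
Your proposal is correct and follows the paper's argument. The lower bound is the theorem's residual-cube/invariance-of-domain proof, which uses only $J$, its iterates, and exact output recovery, and the upper bound is the theorem's affine encoder. One small point: realize $J$ directly in the one-state model by playing the zero-reward action $a_0$ before the policy generating $x$. The finite-MDP realization lemma you cite builds a multi-state MDP, but this does not affect the argument.
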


\proof{Proof.}
The lower-bound proof above uses only $J$ and its iterates, all realizable in the one-state model by taking the zero-reward action for the required prefixes before following the policy generating $x$. The affine construction supplies the matching upper bound for the full context family.
\Halmos\endproof

This corollary separates criterion memory from environment memory. The physical Markov state is fixed and trivial; the extra coordinates are required solely to propagate the objective's temporal weighting without an externally supplied clock. Geometric discounting costs one endogenous coordinate, quasi-hyperbolic weighting costs two, a $K$-mode exponential mixture costs $K$ after repeated modes are combined, and infinite temporal rank requires infinitely many coordinates in the exact continuous autonomous class.

\begin{keycorollary}[Positive vanishing-ratio criteria have infinite temporal rank]
Suppose
\begin{equation}
\label{eq:vanishing-ratio}
w_n>0
\quad\text{for all }n,
\qquad
\frac{w_{n+1}}{w_n}\longrightarrow0.
\end{equation}
Then $w\in\ell^1$, the shifts $\{S^iw:i\ge0\}$ are linearly independent, and therefore
\[
r_w=\infty,
\qquad
E_w\notin\Fclass_\tau^{\aut}
\]
for either selected topology.
\end{keycorollary}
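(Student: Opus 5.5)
The corollary has three conclusions, which I will establish in turn: $w\in\ell^1$, linear independence of the shifts, and then $r_w=\infty$ with $E_w\notin\Fclass_\tau^{\aut}$.

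\emph{Summability.} Since $w_{n+1}/w_n\to0$, there is an $N$ with $w_{n+1}\le \tfrac12 w_n$ for all $n\ge N$. Hence $w_n\le w_N 2^{-(n-N)}$ for $n\ge N$. The tail is dominated by a geometric series, and positivity gives $\|w\|_1=\sum_n w_n<\infty$.

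\emph{Linear independence.} This is the main step. Suppose a nontrivial finite relation $\sum_{i=0}^{K}c_iS^iw=0$ holds, and let $j$ be the smallest index with $c_j\neq0$. Reading the relation coordinatewise at position $n$ gives
\[
c_jw_{n+j}+\sum_{i=j+1}^{K}c_iw_{n+i}=0
\qquad\text{for all }n\ge0 .
\]
Divide by $w_{n+j}>0$. For each $i>j$, the ratio $w_{n+i}/w_{n+j}$ is a product of $i-j$ consecutive ratios $w_{m+1}/w_m$, each tending to $0$. So every term with $i>j$ tends to $0$ as $n\to\infty$, and letting $n\to\infty$ forces $c_j=0$. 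This contradicts the choice of $j$, so all finite subfamilies of $\{S^iw\}$ are linearly independent.

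\emph{Conclusion.} Linear independence makes $\dim\mathcal W_w$ infinite, so $r_w=\infty$. Since $w\in\ell^1$, Theorem~\ref{thm:weighted-hankel-characterization} applies, and its equivalence $E_w\in\Fclass_\tau^{\aut}\Leftrightarrow r_w<\infty$ gives $E_w\notin\Fclass_\tau^{\aut}$ for both $\tau=\prodtop$ and $\tau=\suptop$.

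The only point needing care is the minimal-index choice in the independence step: taking the smallest nonzero coefficient is what makes its term dominate, since all the other terms decay relative to it.
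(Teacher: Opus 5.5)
Your proof is correct and follows the paper's argument essentially step for step. It uses the eventual ratio bound $w_{n+1}\le\tfrac12 w_n$ for summability, takes the minimal nonzero index in a putative finite relation, divides by $w_{n+j}$, writes the remaining ratios as products of consecutive ratios tending to zero, and then invokes the sharp temporal-Hankel characterization to obtain $E_w\notin\Fclass_\tau^{\aut}$, a final step the paper's proof leaves implicit.
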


\proof{Proof.}
Because $w_{n+1}/w_n\to0$, the ratio is eventually at most $1/2$, so $w\in\ell^1$. If a nontrivial finite relation
\[
\sum_{j=0}^m c_jS^jw=0
\]
held, let $j_*$ be the smallest index with $c_{j_*}\neq0$. Looking at coordinate $n$ and dividing by the positive number $w_{n+j_*}$ gives
\[
c_{j_*}
+
\sum_{j>j_*}c_j\frac{w_{n+j}}{w_{n+j_*}}
=0.
\]
For every fixed $j>j_*$, the ratio on the right is a finite product of consecutive ratios $w_{q+1}/w_q$ and tends to zero. Letting $n\to\infty$ yields $c_{j_*}=0$, a contradiction. Hence all finite subsets of the shift orbit are linearly independent.
\Halmos\endproof

For this positive vanishing-ratio subclass, $E_w$ is total and continuous under both selected topologies, positivity gives stagewise monotonicity, and $\BX$ maps $X$ into itself. Hence
\[
E_w\in\Dclass\cap\Mclass
\qquad\text{but}\qquad
E_w\notin\Fclass_\tau^{\aut}.
\]
Thus divergence, partial-domain failure, and sign reversal have all been removed before the representation obstruction appears.

Concrete examples are immediate. If $0<q<1$ and $p>1$, then
\[
w_n=q^{n^p}
\quad\Longrightarrow\quad
\frac{w_{n+1}}{w_n}
=
q^{(n+1)^p-n^p}
\longrightarrow0,
\]
so every criterion
\[
E_{q,p}(x)=\sum_{n\ge0}q^{n^p}x_n,
\qquad p>1,
\]
has infinite temporal Hankel rank; the former $q^{n^2}$ witness is the case $p=2$. Likewise, $w_n=(n!)^{-\alpha}$ with $\alpha>0$ has infinite rank because $w_{n+1}/w_n=(n+1)^{-\alpha}\to0$. The ratio-zero condition is only a convenient sufficient condition, not the theorem boundary.

The sharp characterization also covers regimes omitted by that sufficient condition. For example, if
\[
w_n=(n+1)^{-p},
\qquad p>1,
\]
then $w\in\ell^1$ but $r_w=\infty$. Indeed, finite rank would imply a finite constant-coefficient recurrence and hence, after combining equal characteristic roots, a representation
\[
w_n=\sum_{j=1}^m P_j(n)\lambda_j^n
\]
for all sufficiently large $n$, with polynomials $P_j$ and active roots $\lambda_j$ \citep{fliess1974hankel}. Because $w\in\ell^1$, the generating series $W(z)=\sum_{n\ge0}w_nz^n$ converges absolutely on $|z|\le1$. In the rational generating function induced by the recurrence, an active root $\lambda_j$ corresponds to a pole at $z=\lambda_j^{-1}$; absolute convergence on the closed unit disk therefore forces $|\lambda_j|<1$ for every active root. Hence, for some $\rho<1$, integer $d\ge0$, and constant $C>0$, one has $|w_n|\le Cn^d\rho^n$ eventually. This is impossible for $w_n=(n+1)^{-p}$, since $(n+1)^{-p}/(n^d\rho^n)\to\infty$. Therefore $r_w=\infty$. The usual nonsummable hyperbolic case $p=1$ remains outside the total bounded-stream setting used here.

At the finite-rank end, geometric and alternating-geometric weights have $r_w=1$. For the quasi-hyperbolic weights from the preceding subsection,
\[
w=(1,\beta\delta,\beta\delta^2,\ldots),
\qquad 0<\beta,\delta<1,
\]
we have $S^2w=\delta Sw$ while $w$ and $Sw$ are linearly independent because $\beta\neq1$, so $r_w=2$. Consequently, the two-dimensional realization constructed above is dimension-minimal even among arbitrary continuous nonlinear autonomous encoders. More generally, after combining repeated modes, a finite mixture
\[
w_n=\sum_{r=1}^K a_r\beta_r^n
\]
with distinct $\beta_r$ and nonzero $a_r$ has temporal rank $K$, and polynomial-times-exponential weights have finite rank determined by the usual recurrence multiplicities. These are classical finite-mode cases \citep{feinberg1994weighted,fliess1974hankel}.

\begin{remark}[Scope of the sharp weighted theorem]
\label{rem:weighted-scope}
Theorem~\ref{thm:weighted-hankel-characterization} is an exact, autonomous representation result. It does not exclude discontinuous encodings, infinite-dimensional states, approximate finite-dimensional representations, explicit calendar-time augmentation, or nonstationary dynamic programming. For any fixed weight sequence $w$, a re-indexed continuation $x=(x_0,x_1,\ldots)$ available at calendar time $t$ has the scalar nonautonomous representation
\[
V_t^w(x)
=
\sum_{j=0}^{\infty}w_{t+j}x_j,
\]
with
\[
V_t^w(c\oplus x)
=
w_t c+V_{t+1}^w(x).
\]
Thus infinite temporal rank rules out compression into a fixed finite-dimensional $\tau$-continuous \emph{autonomous} continuation state, not the use of an externally supplied clock. This is consistent with the general-discounting literature \citep{carmon2009exponential}.
\end{remark}

The positive vanishing-ratio subclass, together with Theorem~\ref{thm:weighted-hankel-characterization}, establishes
\[
\Fclass_\tau^{\aut}
\subsetneq
\Dclass
\]
and, because every criterion in that subclass lies in $\Mclass$, also shows
\[
\Mclass\not\Rightarrow\Fclass_\tau^{\aut}.
\]

\subsection{Contextual order consistency is distinct from stagewise monotonicity}

Finally, fix \(0<\gamma<1\) and define
\[
E_-(x)
=
-\sum_{t=0}^{\infty}\gamma^t x_t.
\]

\begin{keyproposition}
For either \(\tau\in\{\prodtop,\suptop\}\),
\[
E_-\in\Oclass_\tau^{\aut}
\qquad\text{but}\qquad
E_-\notin\Mclass.
\]
\end{keyproposition}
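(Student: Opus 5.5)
We argue exactly as for $E_{\pm}$, with the sign of the scalar update now favorable. First, continuity under either $\tau\in\{\prodtop,\suptop\}$ follows from the absolute-summability lemma with weights $w_t=-\gamma^t\in\ell^1$. Totality on $X$ is immediate, and every context in $\BX$ maps $X$ into $X$, so $E_-\in\Dclass$.

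For scalar sufficiency, fix a context $B[x]=c\oplus\bigl(p\,x+\sum_i p_i y^i\bigr)\in\BX$. Linearity and the geometric structure of the weights give
\[
E_-(B[x])=-c+\gamma p\,E_-(x)+\gamma\sum_i p_iE_-(y^i).
\]
Thus the context acts through the affine map
\[
W_B(u)=-c+\gamma p\,u+\gamma\sum_i p_iE_-(y^i),
\]
restricted to $E_-(X)$. This map is continuous, so $E_-\in\Sclass_\tau^{\aut}$. Its slope $\gamma p$ is nonnegative, so $W_B$ is nondecreasing. Equivalently, $E_-(x)\le E_-(y)$ implies $E_-(B[x])\le E_-(B[y])$, and therefore $E_-\in\Oclass_\tau^{\aut}$. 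The case $p=0$ is harmless: the map is then constant, which is still nondecreasing.

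For the failure of stagewise monotonicity, take $x=0$ and $y=(1,0,0,\ldots)$, both in $X$. Then $x_t\le y_t$ for every $t$, but
\[
E_-(x)=0>-1=E_-(y),
\]
so $E_-\notin\Mclass$.

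There is no real obstacle in this argument. The only point needing care is that the definition of $\Oclass_\tau^{\aut}$ requires membership in $\Sclass_\tau^{\aut}$, and hence in $\Fclass_\tau^{\aut}$. This is supplied by the scalar state $z=E_-$, the output map $o=\mathrm{id}$, and the updates $T_B=W_B$. One must check that $W_B$ maps $E_-(X)$ into itself, and this holds because $W_B(E_-(x))=E_-(B[x])$ with $B[x]\in X$.

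Together with the $E_{\pm}$ example, this proposition gives $\Oclass_\tau^{\aut}\not\Rightarrow\Mclass$ and completes Theorem~\ref{thm:strict-chain}.
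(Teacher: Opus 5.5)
Your proof is correct and follows the paper's argument essentially line for line: continuity comes from the absolute-summability lemma, the context update $E_-(B[x])=-c+\gamma p\,E_-(x)+\gamma\sum_i p_iE_-(y^i)$ has nonnegative slope $\gamma p$, and the witness $0\le(1,0,0,\ldots)$ gives $E_-(0)=0>-1$. Your explicit check that $W_B$ maps $E_-(X)$ into itself, so that the scalar realization meets the $\Fclass_\tau^{\aut}$ requirements, is a detail the paper leaves implicit.
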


\proof{Proof.}
Continuity follows from the absolute-summability lemma above. For a context
\[
B[x]
=
c\oplus
\left(
p\,x+\sum_i p_i y^i
\right),
\]
we have
\[
E_-(B[x])
=
-c+\gamma p\,E_-(x)
+
\gamma\sum_i p_iE_-(y^i).
\]
The coefficient \(\gamma p\) is nonnegative, so every scalar context update is nondecreasing and \(E_-\in\Oclass_\tau^{\aut}\).

Stagewise monotonicity fails. For example, \(0\le(1,0,0,\ldots)\) coordinatewise, but
\[
E_-(0)=0>-1=E_-(1,0,0,\ldots).
\]
\Halmos\endproof

\proof{Proof of Theorem~\ref{thm:strict-chain}.}
The inclusions
\[
\Oclass_\tau^{\aut}
\subseteq
\Sclass_\tau^{\aut}
\subseteq
\Fclass_\tau^{\aut}
\subseteq
\Dclass
\]
hold by definition. The alternating-geometric construction makes the first inclusion strict, the quasi-hyperbolic construction makes the second strict, and Theorem~\ref{thm:weighted-hankel-characterization}, witnessed by the positive vanishing-ratio subclass, makes the third strict. The same subclass gives
\(\Mclass\not\Rightarrow\Fclass_\tau^{\aut}\). The negative-geometric construction gives
\(\Oclass_\tau^{\aut}\not\Rightarrow\Mclass\).
\Halmos\endproof

\subsection{Interpretation}

Theorem~\ref{thm:strict-chain} identifies distinct failure modes within the total-cube architecture of Section~\ref{sec:setup}. Its final strict inclusion is specifically a total-coverage-versus-realizability separation, not a second proof of partial-domain constructor failure. The role of the vanishing-ratio weighted class is to make the representation layer nontrivial even after the simpler explanations have been removed: the criterion is total, continuous, and stagewise monotone, yet exact autonomous recursion still requires more information than any fixed finite-dimensional continuous Euclidean state can retain. By contrast, the quasi-hyperbolic witness shows a later scalar-sufficiency failure, and the alternating-geometric witness shows that exact scalar recursion can still fail to preserve contextual order. Stagewise reward monotonicity is independent of these structural properties.

These are structural, not algorithmic, statements. They neither establish nor preclude optimal-policy existence, stationary optimality, contraction, Bellman fixed-point uniqueness, or value-iteration convergence. The next sections apply the same distinction to generalized summation.

\section{Generalized summation and scalar Bellman compatibility}
\label{sec:summability}

The preceding separation results were stated on the bounded stream cube used to model a rich finite-MDP policy class. We now change mathematical settings. This section concerns generalized summation on a domain
\[
D\subseteq\mathbb{R}^{\Nzero},
\]
which need not be bounded and should not be identified with the stream space of Section~\ref{sec:setup}. The purpose is to isolate what scalar one-step recursion requires from a summation rule before any particular generalized method is considered.

For any partial evaluation \(E:D\to\mathbb{R}\), we say that \(E\) is \emph{stagewise monotone on \(D\)} if, for all \(x,y\in D\), coordinatewise dominance \(x_t\le y_t\) for every \(t\) implies \(E(x)\le E(y)\). This extends the notation \(\Mclass\) from Section~\ref{sec:setup}, where the domain is the full cube \(X\).

Let
\[
c_{00}
=
\left\{
x\in\mathbb{R}^{\Nzero}:
 x_t=0\text{ for all sufficiently large }t
\right\}
\]
denote the finite-support real streams, and assume throughout this subsection that
\[
c_{00}\subseteq D.
\]

\begin{keydefinition}[Finite-support regularity]
An evaluation
\[
E:D\to\mathbb{R}
\]
is \emph{finitely regular} if
\[
E(x)=\sum_{t=0}^{\infty}x_t
\qquad
\text{for every }x\in c_{00}.
\]
The sum on the right is finite because each such stream has finite support.
\end{keydefinition}

Finite-support regularity is deliberately weaker than requiring agreement with every ordinarily convergent infinite series. It is nevertheless enough to determine the only possible scalar prefix update.

\begin{keyremark}[Criteria outside the summation setting]
Proposition~\ref{thm:translation-necessity} is a statement about generalized summation rules satisfying finite-support regularity, not a universal restriction on all undiscounted infinite-horizon criteria. For example, whenever the long-run average
\[
E_{\mathrm{avg}}(x)
=
\lim_{N\to\infty}\frac{1}{N+1}\sum_{t=0}^{N}x_t
\]
exists, every finite-support stream has long-run average zero. Hence this criterion is not finitely regular on $c_{00}$ and lies outside the assumptions of Proposition~\ref{thm:translation-necessity}.
\end{keyremark}

\begin{keydefinition}[Scalar-prefix compatibility]
Assume that \(D\) is closed under arbitrary real prefixing:
\[
x\in D,\ c\in\mathbb{R}
\quad\Longrightarrow\quad
c\oplus x\in D.
\]
We say that \(E\) is \emph{scalar-prefix compatible} if there exists a single map
\[
W:\mathbb{R}^2\to\mathbb{R}
\]
such that
\[
E(c\oplus x)=W(c,E(x))
\]
for every \(c\in\mathbb{R}\) and \(x\in D\).
\end{keydefinition}

The use of one common map \(W\) is essential: the current term and the scalar continuation value must determine the prefixed value without access to any further information about the continuation stream.

\begin{proposition}[Scalar-prefix translation necessity]
\label{thm:translation-necessity}
Suppose that:
\begin{enumerate}
\item[(i)] \(c_{00}\subseteq D\);
\item[(ii)] \(D\) is closed under arbitrary real prefixing;
\item[(iii)] \(E:D\to\mathbb{R}\) is finitely regular; and
\item[(iv)] \(E\) is scalar-prefix compatible through one common map \(W\).
\end{enumerate}
Then
\begin{equation}
\label{eq:translation-W}
W(c,v)=c+v
\qquad
\text{for all }c,v\in\mathbb{R}.
\end{equation}
Consequently,
\begin{equation}
\label{eq:translation-prefix}
E(c\oplus x)=c+E(x)
\qquad
\text{for all }c\in\mathbb{R},\ x\in D.
\end{equation}
No linearity assumption on \(E\) is required.
\end{proposition}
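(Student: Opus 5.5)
The plan is to evaluate the scalar-prefix identity only on finite-support streams, where finite regularity pins down both sides. Since $W$ is a single map on $\mathbb R^2$, it suffices to determine $W(c,v)$ at every pair $(c,v)$. For a given $v\in\mathbb R$, I will use the one-term stream $x=(v,0,0,\ldots)\in c_{00}\subseteq D$, which has $E(x)=v$ by (iii). By (ii), $c\oplus x=(c,v,0,0,\ldots)$ belongs to $D$; it is also finitely supported, so (iii) gives $E(c\oplus x)=c+v$. Then (iv) yields
\[
W(c,v)=W\bigl(c,E(x)\bigr)=E(c\oplus x)=c+v .
\]
Because $c$ and $v$ were arbitrary reals, this is \eqref{eq:translation-W}. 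Note that $c\oplus x\in c_{00}$ already, so (ii) is only needed to make the identity in (iv) applicable; in fact this step only uses (i), (iii) and (iv).

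For \eqref{eq:translation-prefix}, take any $x\in D$ and $c\in\mathbb R$. By (ii), $c\oplus x\in D$, and (iv) gives $E(c\oplus x)=W(c,E(x))$. Substituting the formula for $W$ just obtained gives $c+E(x)$. The function $E$ was never required to be additive or linear: the argument only uses that the single map $W$ sees the value $E(x)$ and is independent of which stream in $D$ produced it.

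I do not expect a real obstacle. The one thing to verify is that every point of $\mathbb R^2$ arises as $(c,E(x))$ for some $x$, so that $W$ is determined everywhere and not only on a subset. The one-term witnesses $x=(v,0,0,\ldots)$ handle this, since they make the range of $E$ on $c_{00}$ all of $\mathbb R$. The step also relies on \emph{one common} $W$. If each continuation or each value of $c$ were allowed its own update map, the conclusion would hold only on the witnessed values and would not carry over to arbitrary $x\in D$.
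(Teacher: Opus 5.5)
Your proof is correct and follows the paper's argument: the one-term witnesses $(v,0,0,\ldots)$ fix $W(c,v)=c+v$ everywhere by applying finite regularity to both $x$ and $c\oplus x$, and substituting $v=E(x)$ then gives the prefix law on all of $D$. One small slip in your closing commentary: allowing a separate update map for each value of $c$ would not weaken the conclusion, because $W(c,\cdot)$ already is such a family; the only essential requirement is that the map not depend on the continuation stream beyond $E(x)$.
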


\proof{Proof.}
Fix arbitrary \(v\in\mathbb{R}\) and consider the finite-support stream
\[
x^{(v)}=(v,0,0,\ldots).
\]
Finite-support regularity gives
\[
E(x^{(v)})=v.
\]
For arbitrary \(c\in\mathbb{R}\), the stream
\[
c\oplus x^{(v)}=(c,v,0,0,\ldots)
\]
also has finite support, so
\[
E(c\oplus x^{(v)})=c+v.
\]
Scalar-prefix compatibility gives, for the same stream,
\[
E(c\oplus x^{(v)})
=
W(c,E(x^{(v)}))
=
W(c,v).
\]
Hence \(W(c,v)=c+v\). Since \(c\) and \(v\) were arbitrary, \eqref{eq:translation-W} holds on all of \(\mathbb{R}^2\). Substituting \(v=E(x)\) yields \eqref{eq:translation-prefix} for every \(x\in D\).
\Halmos\endproof

If \(D\) is also shift closed, then every \(x\in D\) satisfies \(x=x_0\oplus Sx\), and Proposition~\ref{thm:translation-necessity} immediately gives
\begin{equation}
\label{eq:translativity}
E(x)=x_0+E(Sx).
\end{equation}
Equation~\eqref{eq:translativity} is the classical \emph{translativity} or stability-under-translation law of summability theory, not a new summability axiom introduced here; standard accounts treat adding or deleting finitely many terms with the corresponding ordinary adjustment as a classical property of summation methods \citep{cooke1950infinite,hardy1949divergent}. Nor is translation invariance a new consequence of recursivity in decision theory: monotone recursive-preference results already impose translation-invariant certainty equivalents in substantially richer stochastic preference settings \citep{bommier2017monotone,marinacci2026recursive}. Proposition~\ref{thm:translation-necessity} is narrower and assumption-specific: for a generalized summation rule, finite-support regularity plus a common scalar prefix aggregator already force the additive prefix law, without linearity or monotonicity.

\begin{corollary}[Linear translativity and scalar Bellman policy evaluation]
\label{cor:linear-bellman}
Assume, in addition, that \(D\) is shift closed, is a vector space containing the relevant finite coordinatewise mixtures, and that \(E\) is linear. Then finite-support regularity together with scalar-prefix compatibility is equivalent to the translative prefix law
\[
E(c\oplus x)=c+E(x).
\]
In either formulation, for every finite probability vector \((p_i)\) and streams \(x^i\in D\) for which the mixture belongs to \(D\),
\begin{equation}
\label{eq:linear-bellman-law}
E\left(
c\oplus\sum_i p_i x^i
\right)
=
c+\sum_i p_iE(x^i).
\end{equation}
Thus, in this linear mixture-closed setting, classical translativity is precisely the scalar one-step Bellman policy-evaluation law.
\end{corollary}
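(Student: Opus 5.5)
The plan is to prove the equivalence in both directions and then derive the mixture law \eqref{eq:linear-bellman-law} from linearity. Proposition~\ref{thm:translation-necessity} already gives the forward direction: its hypotheses (i)--(iv) hold here, so finite-support regularity plus scalar-prefix compatibility yields $E(c\oplus x)=c+E(x)$. Prefix closure of $D$ is part of the scalar-prefix compatibility definition, so it is available.

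For the converse, I will assume the translative prefix law and check the two properties separately. Scalar-prefix compatibility is immediate with the common map $W(c,v)=c+v$. For finite-support regularity, I will argue by induction on the support length. The base case is the zero stream, where linearity gives $E(0)=0$; note that $0\in D$ because $D$ is a vector space. For $x\in c_{00}$ with $x_t=0$ for $t\ge N$, write $x=x_0\oplus Sx$; here $Sx\in D$ by shift closure. The prefix law then gives $E(x)=x_0+E(Sx)$, and $Sx$ has support length $N-1$, so induction yields $E(x)=\sum_t x_t$. One subtlety is whether $c_{00}\subseteq D$ must be assumed or can be derived. I would derive it from $0\in D$ and prefix closure, since iterated prefixing of $0$ produces every finitely supported stream. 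That closes the equivalence.

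For the mixture law, I will take streams $x^i\in D$ with $\sum_i p_ix^i\in D$. The prefix law gives $E\bigl(c\oplus\sum_ip_ix^i\bigr)=c+E\bigl(\sum_ip_ix^i\bigr)$, and linearity of $E$ on the vector space $D$ splits the second term into $\sum_ip_iE(x^i)$.

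The only delicate step is bookkeeping about which closure hypotheses license each object to lie in $D$: prefix closure for $c\oplus(\cdot)$, shift closure for $Sx$, and the vector-space structure for mixtures and for $0$. I will state these memberships explicitly at each use. Everything else is formal.
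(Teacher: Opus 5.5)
Your proposal is correct and follows the paper's proof essentially step for step. The forward direction comes from the scalar-prefix translation necessity proposition, the converse takes $W(c,v)=c+v$ and unwinds translativity on a finitely supported stream down to $E(0)=0$, and the mixture law combines the prefix law with linearity; your side remark deriving $c_{00}\subseteq D$ from $0\in D$ and prefix closure is harmless but unnecessary, since the paper carries $c_{00}\subseteq D$ as a standing assumption.
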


\proof{Proof.}
Proposition~\ref{thm:translation-necessity} gives the translative prefix law from finite-support regularity and scalar-prefix compatibility.

Conversely, assume the translative prefix law. It immediately gives scalar-prefix compatibility by taking
\[
W(c,v)=c+v.
\]
It also recovers finite-support regularity from linearity. Indeed, linearity gives \(E(0)=0\). If
\[
x=(x_0,\ldots,x_N,0,0,\ldots)
\]
has finite support, repeated use of translativity yields
\[
E(x)
=
x_0+E(Sx)
=
\cdots
=
\sum_{t=0}^{N}x_t+E(0)
=
\sum_{t=0}^{N}x_t.
\]
Thus the two formulations are equivalent under the stated linearity and domain assumptions.

Finally, linearity gives
\[
E\left(\sum_i p_i x^i\right)
=
\sum_i p_iE(x^i),
\]
and applying the prefix law yields \eqref{eq:linear-bellman-law}.
\Halmos\endproof

The corollary is intentionally a policy-evaluation statement. It does not by itself imply optimal-policy existence, stationary optimality, contraction of an optimality operator, uniqueness of a Bellman fixed point, or convergence of value iteration.

\subsection{Monotonicity excludes persistent positive divergence}

The previous result uses scalar recursive sufficiency. A different obstruction follows from finite-support regularity and stagewise monotonicity alone.

For \(x=(x_0,x_1,\ldots)\), define its \(N\)-th finite truncation by
\[
x^{(N)}
=
(x_0,\ldots,x_N,0,0,\ldots).
\]

\begin{corollary}[Nonnegative positive-divergence exclusion]
\label{cor:positive-divergence}
Let \(E:D\to\mathbb{R}\) be finitely regular and stagewise monotone. Then \(D\) contains no stream \(x\) such that
\[
x_t\ge0
\qquad
\text{for all }t,
\]
every truncation \(x^{(N)}\) belongs to \(D\), and
\[
\sum_{t=0}^{N}x_t\longrightarrow+\infty.
\]
Equivalently, any extension that includes such an \(x\) while preserving finite-support regularity and stagewise monotonicity cannot remain real valued at \(x\).
\end{corollary}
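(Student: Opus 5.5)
The plan is a direct contradiction argument that compares \(x\) with its truncations. Suppose, for contradiction, that \(D\) contains a stream \(x\) with \(x_t\ge0\) for every \(t\), with every truncation \(x^{(N)}\in D\), and with \(\sum_{t=0}^{N}x_t\to+\infty\). Since \(E\) is real valued on \(D\), the number \(E(x)\) is a finite real. I will trap it below by an unbounded sequence.

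First, each truncation \(x^{(N)}\) lies in \(c_{00}\), so finite-support regularity gives
\[
E\bigl(x^{(N)}\bigr)=\sum_{t=0}^{N}x_t .
\]
Second, nonnegativity of the tail gives the coordinatewise dominance \(x^{(N)}_t\le x_t\) for all \(t\). At coordinates \(t\le N\) the two sides are equal, and at \(t>N\) we have \(0\le x_t\). Both streams belong to \(D\): \(x^{(N)}\) by hypothesis and \(x\) by assumption. Stagewise monotonicity on \(D\) therefore yields
\[
\sum_{t=0}^{N}x_t=E\bigl(x^{(N)}\bigr)\le E(x)
\qquad\text{for every }N.
\]
Letting \(N\to\infty\), the left side tends to \(+\infty\), which contradicts the finiteness of \(E(x)\).

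The "equivalently" clause is just a restatement. Take any extension \(E'\) to a domain \(D'\supseteq D\cup\{x\}\) that is finitely regular and stagewise monotone on \(D'\). The same chain of inequalities, applied in \(D'\), shows that \(E'(x)\) exceeds every partial sum, so \(E'(x)\) cannot be a real number.

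There is no real obstacle here. The only point needing care is the domain bookkeeping: stagewise monotonicity is defined only for pairs of streams that both lie in \(D\). That is exactly why the hypothesis that every truncation belongs to \(D\) is imposed. The inclusion \(c_{00}\subseteq D\) assumed throughout this subsection already supplies it.
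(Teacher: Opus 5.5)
Your proposal is correct and follows essentially the same route as the paper's proof: coordinatewise dominance $x^{(N)}\le x$, stagewise monotonicity giving $E(x^{(N)})\le E(x)$, finite-support regularity identifying $E(x^{(N)})$ with the partial sums, and a contradiction with divergence to $+\infty$. Your remarks on domain bookkeeping and on the ``equivalently'' clause are accurate but add nothing essential beyond the paper's argument.
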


\proof{Proof.}
Suppose for contradiction that such an \(x\) belongs to \(D\). For every \(N\),
\[
x^{(N)}\le x
\]
coordinatewise. Stagewise monotonicity therefore gives
\[
E(x^{(N)})\le E(x).
\]
By finite-support regularity,
\[
E(x^{(N)})
=
\sum_{t=0}^{N}x_t.
\]
Because \(E:D\to\mathbb{R}\) is finite valued, membership \(x\in D\) would make \(E(x)\) a finite common upper bound for all partial sums, contradicting their divergence to \(+\infty\). Hence no such \(x\) can lie in \(D\).
\Halmos\endproof

The conclusion is deliberately one-sided. It does not say that every stream whose divergence is caused by cancellation or oscillation admits a useful generalized value. It says only that a finite-valued finitely regular monotone extension cannot assign a finite value to a nonnegative stream with unbounded ordinary partial sums.

\begin{keyproposition}[Persistent constant streams]
Suppose \(E\) satisfies the hypotheses of Proposition~\ref{thm:translation-necessity}. Then no nonzero constant stream
\[
\mathbf c=(c,c,c,\ldots),
\qquad c\ne0,
\]
can belong to \(D\).
\end{keyproposition}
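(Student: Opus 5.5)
The argument is by contradiction, and Proposition~\ref{thm:translation-necessity} carries all of it. Suppose a constant stream \(\mathbf c=(c,c,c,\ldots)\) with \(c\neq0\) lies in \(D\). The key observation is that \(\mathbf c\) is a fixed point of prefixing by its own value:
\[
c\oplus\mathbf c=\mathbf c.
\]
This identity holds coordinatewise, and by hypothesis (ii) it is an admissible use of the prefix operation on \(D\). Applying the translative prefix law \eqref{eq:translation-prefix} to \(x=\mathbf c\) gives
\[
E(\mathbf c)=E(c\oplus\mathbf c)=c+E(\mathbf c).
\]
Because \(E\) is real valued on \(D\), we may subtract the finite number \(E(\mathbf c)\) from both sides. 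This yields \(c=0\), contradicting \(c\neq0\).

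The steps, in order, are:
\begin{enumerate}
\item invoke Proposition~\ref{thm:translation-necessity}, which is available since its hypotheses (i)--(iv) are assumed, to obtain \(E(c\oplus x)=c+E(x)\) on all of \(D\);
\item record the identity \(c\oplus\mathbf c=\mathbf c\);
\item substitute \(x=\mathbf c\) and cancel the finite value \(E(\mathbf c)\).
\end{enumerate}
No shift-closure, linearity, or monotonicity of \(E\) is needed. In particular, the statement is not a consequence of Corollary~\ref{cor:positive-divergence}: that corollary would only handle \(c>0\), and only with monotonicity assumed.

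I do not see a real obstacle. The one point to check is that finiteness of \(E(\mathbf c)\) is exactly what makes the cancellation legitimate. It is worth remarking that this is the same mechanism behind classical statements such as ``Ces\`aro or Abel-type values cannot be translative on constants.'' The conclusion therefore says that scalar-prefix compatibility together with finite regularity forbids assigning any finite value to nonzero constant streams, regardless of sign.
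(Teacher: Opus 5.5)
Your proof is correct and is exactly the paper's argument: substituting the fixed-point identity $c\oplus\mathbf c=\mathbf c$ into the translative prefix law $E(c\oplus x)=c+E(x)$ from the scalar-prefix translation-necessity proposition gives $E(\mathbf c)=c+E(\mathbf c)$, which is impossible for a real value $E(\mathbf c)$ when $c\neq0$. Your observation that neither shift closure nor monotonicity is needed also matches the paper's remark after the proof.
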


\proof{Proof.}
If $\mathbf c\in D$, then $\mathbf c=c\oplus\mathbf c$ and Proposition~\ref{thm:translation-necessity} would imply
\[
E(\mathbf c)=c+E(\mathbf c),
\]
which is impossible in $\mathbb R$ when $c\ne0$.
\Halmos\endproof

This obstruction does not require stagewise monotonicity. A generalized summation that assigns a finite value to a nonzero persistent constant stream must therefore give up at least one hypothesis of Proposition~\ref{thm:translation-necessity}; Section~\ref{sec:ramanujan} will show that literal Candelpergher Ramanujan summation does so through its nonclassical translation behavior and, more fundamentally for control, through its domain structure.

\subsection{A finite-state Poisson gate}

The translation law has an immediate consequence for finite Markov reward processes. Let \(P\in\mathbb{R}^{m\times m}\) be a finite stochastic matrix and let \(r\in\mathbb{R}^m\) be a stage-reward vector. For state \(s\), define the expected stage-reward stream
\begin{equation}
\label{eq:mrp-stream}
g_s
=
\left(
e_s^\top r,
e_s^\top Pr,
e_s^\top P^2r,
\ldots
\right).
\end{equation}
Let \(\Pi\) denote the Ces\`aro, equivalently spectral, projection of \(P\) onto its eigenvalue-\(1\) subspace.

\begin{corollary}[Finite-state zero-gain necessity]
\label{cor:zero-gain}
Suppose \(E\) is a linear translative evaluation on a vector-space domain containing every stream \(g_s\) in \eqref{eq:mrp-stream}, and suppose each value
\[
V_s:=E(g_s)
\]
is finite. Then
\begin{equation}
\label{eq:poisson-gate}
V=r+PV,
\end{equation}
and necessarily
\begin{equation}
\label{eq:zero-gain}
\Pi r=0.
\end{equation}
\end{corollary}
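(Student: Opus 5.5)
The plan is to read off the Poisson equation directly from the structure of the stream $g_s$, and then to apply the Ces\`aro projection to it. The key observation is that $g_s$ splits as a prefix followed by a mixture of the successor streams:
\[
g_s = (e_s^\top r)\oplus \sum_{s'} P(s,s')\, g_{s'},
\]
since for $t\ge1$ the $t$-th coordinate of $g_s$ is $e_s^\top P^{t}r=\sum_{s'}P(s,s')\,e_{s'}^\top P^{t-1}r$. The mixture $\sum_{s'}P(s,s')g_{s'}$ belongs to the domain because the domain is a vector space containing every $g_{s'}$. Equivalently, $Sg_s$ equals this mixture, so $g_s=(g_s)_0\oplus Sg_s$, and the translative law gives $E(g_s)=r_s+E(Sg_s)$.

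Linearity then yields $E(Sg_s)=\sum_{s'}P(s,s')V_{s'}$. Combining the two identities gives $V_s=r_s+(PV)_s$ for every $s$, which is \eqref{eq:poisson-gate}. This is exactly the linear Bellman law \eqref{eq:linear-bellman-law} of Corollary~\ref{cor:linear-bellman} applied with $c=r_s$ and $p_i=P(s,s')$. The one point to check is that translativity is used in the form $E(c\oplus x)=c+E(x)$ with $x$ equal to the mixture; that $x$ lies in the domain, so nothing further is needed.

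For \eqref{eq:zero-gain}, I would use the standard properties of the Ces\`aro projection, namely $\Pi=\lim_N \frac1{N}\sum_{n<N}P^n$, which satisfies $\Pi P=P\Pi=\Pi$ for a finite stochastic matrix. Multiplying \eqref{eq:poisson-gate} on the left by $\Pi$ gives $\Pi V=\Pi r+\Pi P V=\Pi r+\Pi V$. Since every entry of $V$ is finite, we may cancel $\Pi V$ and conclude $\Pi r=0$.

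I expect no real obstacle here. The only care needed is to justify $\Pi P=\Pi$, and this follows by telescoping the Ces\`aro averages: $\frac1N\sum_{n<N}P^{n}(P-I)=\frac1N(P^N-I)\to0$, because the powers $P^N$ are bounded.
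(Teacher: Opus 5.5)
Your proposal is correct and follows the paper's proof essentially step for step: write $Sg_s$ as the $P$-weighted mixture of the successor streams, apply translativity and linearity to get $V=r+PV$, then multiply by $\Pi$ and use $\Pi P=\Pi$. You also fill in two details the paper leaves implicit: $Sg_s$ lies in the domain because it is a linear combination of the $g_{s'}$, and $\Pi P=\Pi$ follows by the telescoping argument.
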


\proof{Proof.}
The tail of \(g_s\) is
\[
Sg_s
=
\left(
e_s^\top Pr,
e_s^\top P^2r,
\ldots
\right).
\]
Coordinatewise,
\[
Sg_s
=
\sum_{s'}P(s'\mid s)g_{s'}.
\]
Translativity and linearity therefore give
\[
V_s
=
r_s+E(Sg_s)
=
r_s+\sum_{s'}P(s'\mid s)V_{s'}.
\]
Stacking these identities over states yields \eqref{eq:poisson-gate}. Since \(\Pi P=\Pi\),
\[
\Pi(I-P)=0.
\]
Multiplying \((I-P)V=r\) by \(\Pi\) gives \eqref{eq:zero-gain}.
\Halmos\endproof

Condition \eqref{eq:zero-gain} is only necessary for an arbitrary linear translative evaluation; no general sufficiency claim is made here. The equation is the familiar finite-state Poisson compatibility condition, and its relation to gain, bias, higher-order bias, and generalized inverses is classical in finite-state Markov decision processes \citep{lamond1989generalized,cao2008bias}. In Section~\ref{sec:abel}, Abel summation will provide a positive benchmark for which this zero-gain condition is also sufficient for finiteness of the statewise Abel limits.

\subsection{Interpretation}

Proposition~\ref{thm:translation-necessity} is a supporting bridge rather than a second headline obstruction theorem. It separates analytic extension from scalar recursive sufficiency: extending a series beyond ordinary convergence is one operation, whereas using the resulting scalar as a one-step continuation state is an additional structural claim. Under finite-support regularity, that claim forces the classical additive prefix law. Monotonicity then excludes persistent nonnegative divergence, while the finite-state version imposes the zero-gain condition \(\Pi r=0\).

These results are not a characterization of all dynamically consistent infinite-horizon objectives. Their role is to provide the summability-specific baseline against which the Ramanujan domain obstruction and the Abel positive benchmark can be compared.

\section{Literal Candelpergher Ramanujan summation}
\label{sec:ramanujan}

We now consider a generalized summation procedure for which value assignment and the structural prerequisites for Bellman recursion separate sharply. This section applies the same common-continuation substitution used in the weighted analysis, but the failure occurs one gate earlier: before asking how the continuation can be compressed, we must first be able to prefix the same continuation by each feasible current alternative. The main decision-theoretic result of this section is Theorem~\ref{thm:ram-domain-obstruction}: a one-step policy deviation can leave the literal analytic sample domain even when the continuation is held fixed. Carlson uniqueness and the tail-rigidity lemma are the classical-input-to-sequential-obstruction bridge used to establish that result, rather than an independent claim of new analytic summability theory. Throughout this section, \emph{Ramanujan summation} means the \emph{literal Candelpergher Ramanujan summation} of Candelpergher~\cite{candelpergher2017}. No result is asserted for other Ramanujan-type methods, other analytic regularizations, or arbitrary sequence-level extensions.

\subsection{Literal analytic domain and shift law}

Candelpergher first fixes the analytic growth class used by the construction. In his notation, a function analytic in a right half-plane \(\operatorname{Re}z>a\) is of exponential type \(<\alpha\) if there exist constants \(C>0\) and \(\beta<\alpha\) such that
\[
|f(z)|\le C e^{\beta |z|}
\]
throughout that half-plane. The class \(O^\alpha\) consists of functions satisfying such a bound in some half-plane \(\operatorname{Re}z>a\) with \(a<1\) \citep[Def.~1]{candelpergher2017}. The literal Ramanujan construction used here is restricted to \(O^\pi\).

\begin{keydefinition}[Literal Candelpergher Ramanujan functional]
For \(f\in O^\pi\), let \(R_f\in O^\pi\) be the unique function satisfying
\begin{equation}
\label{eq:ram-difference}
R_f(x)-R_f(x+1)=f(x),
\qquad
\int_1^2 R_f(x)\,dx=0.
\end{equation}
The literal Candelpergher Ramanujan value is
\begin{equation}
\label{eq:ram-definition}
\mathcal R(f):=R_f(1).
\end{equation}
Equivalently, Candelpergher writes this as the Ramanujan sum of the positive-integer samples \(f(1),f(2),\ldots\) \citep[Def.~2]{candelpergher2017}.
\end{keydefinition}

For the decision-theoretic statements below, restrict the literal construction to the real-valued subclass
\[
O^\pi_{\mathbb R}
=
\{f\in O^\pi: f(x)\in\mathbb R\text{ for every real }x\text{ in its domain}\}.
\]
If \(f\in O^\pi_{\mathbb R}\), then analyticity and the identity theorem give \(f(z)=\overline{f(\bar z)}\) on the relevant right half-plane. Hence \(\widetilde R(z)=\overline{R_f(\bar z)}\) satisfies the same difference equation and the same real normalization integral as \(R_f\). Uniqueness of the normalized remainder therefore gives \(\widetilde R=R_f\), so \(R_f\) is real on the real axis and \(\mathcal R(f)\in\mathbb R\).

The indexing in this paper is therefore fixed as follows. Associate to \(f\in O^\pi_{\mathbb R}\) the stream
\begin{equation}
\label{eq:ram-sample-stream}
u^f_t=f(t+1),
\qquad t\in\Nzero,
\end{equation}
so that
\[
u^f=(f(1),f(2),f(3),\ldots).
\]
The induced literal real sample domain is
\begin{equation}
\label{eq:ram-domain}
D_{\mathcal R}
=
\{u^f:f\in O^\pi_{\mathbb R}\}
\subseteq\mathbb R^{\Nzero}.
\end{equation}
Carlson uniqueness below implies that each stream in \(D_{\mathcal R}\) has at most one admissible interpolation in \(O^\pi_{\mathbb R}\). Thus the literal construction induces a well-defined partial real stream evaluation on \(D_{\mathcal R}\); we use the same symbol \(\mathcal R\) for this induced evaluation. Because the analytic interpolation is part of the literal construction, membership in \(D_{\mathcal R}\) is stronger than merely specifying a bare sequence of real numbers.

For an analytic function, write
\[
(Sf)(x)=f(x+1).
\]
Its positive-integer sample stream is the left shift of \(u^f\). Candelpergher's normalized functional obeys the exact shift law
\begin{equation}
\label{eq:ram-shift}
\mathcal R(Sf)
=
\mathcal R(f)-f(1)+\int_1^2 f(x)\,dx,
\end{equation}
which is Eq.~(2.1) in Candelpergher~\cite{candelpergher2017}. In particular, defining the additive one-step defect by
\[
\Delta_{\mathcal R}(f)
=
\mathcal R(f)-f(1)-\mathcal R(Sf),
\]
we have
\begin{equation}
\label{eq:ram-defect}
\Delta_{\mathcal R}(f)
=
-\int_1^2 f(x)\,dx.
\end{equation}
Equation~\eqref{eq:ram-shift} is a source property of the literal summation rule, not a novelty claim of this paper. Candelpergher explicitly notes that it differs from the usual translation law of convergent series \citep[Sec.~2.1.1]{candelpergher2017}.

\subsection{Tail rigidity of the literal sample domain}

The key analytic input is the Carlson uniqueness theorem in the form used by Candelpergher. His Appendix~B states that if a function is analytic in a half-plane \(\operatorname{Re}z>-d\), with \(0<d<1\), and satisfies a bound
\[
|f(z)|\le a e^{b|z|}
\qquad
\text{for some }a>0,
\quad b<\pi,
\]
then vanishing at every nonnegative integer forces the function to vanish identically \citep[App.~B]{candelpergher2017}. This is the Carlson uniqueness regime used below.

\begin{lemma}[Ramanujan tail rigidity]
\label{lem:ram-tail-rigidity}
Let \(f,g\in O^\pi\). If there exists an integer \(N\ge1\) such that
\[
f(n)=g(n)
\qquad
\text{for every integer }n\ge N,
\]
then
\[
f=g.
\]
\end{lemma}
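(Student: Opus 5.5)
The plan is to reduce to Carlson uniqueness in the form quoted from Candelpergher's Appendix~B, applied to a shifted difference. Set \(h=f-g\). Both \(f\) and \(g\) lie in \(O^\pi\), so each is analytic in some half-plane \(\operatorname{Re}z>a_f\), respectively \(\operatorname{Re}z>a_g\), with \(a_f,a_g<1\). Each also satisfies a bound \(Ce^{\beta|z|}\) with \(\beta<\pi\). Therefore \(h\) is analytic on \(\operatorname{Re}z>a:=\max(a_f,a_g)<1\) and obeys \(|h(z)|\le C'e^{\beta'|z|}\) there, with \(\beta'=\max(\beta_f,\beta_g)<\pi\). By hypothesis, \(h(n)=0\) for every integer \(n\ge N\).

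Next I translate so that the zero set starts at \(0\). Define \(k(z)=h(z+N)\). Then \(k\) is analytic on \(\operatorname{Re}z>a-N\), and \(a-N<1-N\le0\). So I can pick some \(d\in(0,1)\) with \(-d\ge a-N\); for instance, if \(a-N<-1\), any \(d\in(0,1)\) works. Then \(k\) is analytic on \(\operatorname{Re}z>-d\). The growth bound transfers because \(|z+N|\le|z|+N\), which gives \(|k(z)|\le C'e^{\beta'N}e^{\beta'|z|}\). The exponent \(\beta'\) is still \(<\pi\) and the constant is absorbed. Finally, \(k(n)=h(n+N)=0\) for all \(n\in\Nzero\). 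Carlson's theorem then gives \(k\equiv0\).

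It remains to pass from \(k\equiv0\) back to \(h\equiv0\). We have \(h\equiv0\) on \(\operatorname{Re}z>N-d\), which is a nonempty open subset of the connected half-plane \(\operatorname{Re}z>a\). The identity theorem then gives \(h\equiv0\) on the whole common half-plane. Hence \(f=g\) as elements of \(O^\pi\), with functions identified on their common domain of analyticity, or on the union of their domains by analytic continuation.

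The main obstacle is bookkeeping rather than analysis. First, the half-plane parameter \(d\) must actually lie in \((0,1)\) after the shift, which is where \(a<1\) and \(N\ge1\) are both used. Second, one must confirm that the growth bound in Candelpergher's definition holds on the whole half-plane, so that it survives translation. If his Appendix~B version requires the bound \(ae^{b|z|}\) only for \(\operatorname{Re}z\ge0\), the argument is the same and even simpler.
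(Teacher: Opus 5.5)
Your proposal is correct and follows the paper's proof essentially step for step: form $h=f-g$, translate by $N$ so the zeros lie on $\Nzero$, check that the shifted function is analytic on some $\operatorname{Re}z>-d$ with $0<d<1$ and keeps exponential type $<\pi$ via $|z+N|\le|z|+N$, and then invoke Carlson uniqueness in Candelpergher's Appendix~B form. Taking the maximum of the two half-plane parameters and growth exponents, and using the identity theorem to return to the full common half-plane, only makes explicit what the paper leaves implicit.
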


\proof{Proof.}
Let
\[
h=f-g.
\]
Then \(h\in O^\pi\) and
\[
h(n)=0
\qquad
\text{for every }n\ge N.
\]
By the definition of \(O^\pi\), there is some \(a<1\) such that \(h\) is analytic in \(\operatorname{Re}z>a\), and there are \(C>0\) and \(\beta<\pi\) such that
\[
|h(z)|\le C e^{\beta |z|}
\]
there. Define
\[
q(z)=h(z+N).
\]
Then \(q\) is analytic in \(\operatorname{Re}z>a-N\). Since \(a-N<0\), choose
\[
0<d<\min\{1,N-a\}.
\]
The half-plane \(\operatorname{Re}z>-d\) lies inside the domain of analyticity of \(q\). Moreover,
\[
|q(z)|
=
|h(z+N)|
\le
C e^{\beta |z+N|}
\le
C e^{\beta N}e^{\beta |z|},
\]
so \(q\) satisfies the same strict type bound \(\beta<\pi\). Finally,
\[
q(m)=h(m+N)=0
\qquad
m=0,1,2,\ldots.
\]
The Carlson theorem quoted above therefore yields \(q\equiv0\). Hence \(h\equiv0\) on a right half-plane and thus \(f=g\).
\Halmos\endproof

The uniqueness theorem is classical; the following sequential consequences are the ones used here.

\begin{keycorollary}[No nonzero finite-support literal sample stream]
If \(u\in D_{\mathcal R}\) is eventually zero, then \(u=0\). In particular,
\[
(1,0,0,\ldots)\notin D_{\mathcal R}.
\]
\end{keycorollary}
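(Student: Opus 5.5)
The plan is to reduce the corollary directly to Lemma~\ref{lem:ram-tail-rigidity}, comparing the interpolant of \(u\) with the zero function. Let \(u\in D_{\mathcal R}\) be eventually zero. By the definition \eqref{eq:ram-domain}, there is some \(f\in O^\pi_{\mathbb R}\) with \(u=u^f\), that is, \(u_t=f(t+1)\) for every \(t\in\Nzero\). Since \(u\) is eventually zero, there is an integer \(N\ge1\) such that \(f(n)=u_{n-1}=0\) for every integer \(n\ge N\).

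Next I would take \(g\equiv0\) and check that \(g\in O^\pi\). This is routine: \(g\) is entire, so it is analytic in any half-plane \(\operatorname{Re}z>a\) with \(a<1\). It also satisfies \(|g(z)|\le Ce^{\beta|z|}\) for any \(C>0\) and, for example, \(\beta=0<\pi\). Then \(f\) and \(g\) agree at every integer \(n\ge N\), so Lemma~\ref{lem:ram-tail-rigidity} gives \(f=g\equiv0\). Consequently \(u_t=f(t+1)=0\) for all \(t\), which proves \(u=0\).

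For the particular stream \((1,0,0,\ldots)\), it is nonzero and eventually zero (with \(N=2\)). If it belonged to \(D_{\mathcal R}\), the first part would force it to equal \(0\), which contradicts its first coordinate being \(1\). Hence \((1,0,0,\ldots)\notin D_{\mathcal R}\).

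No step is a real obstacle, because Lemma~\ref{lem:ram-tail-rigidity} already contains the Carlson argument. The only points that need care are the index shift between \(u_t\) and \(f(t+1)\), which guarantees \(N\ge1\) as the lemma requires, and the check that the zero function lies in \(O^\pi\).
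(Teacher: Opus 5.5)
Your proof is correct and follows essentially the same argument as the paper. Both write $u=u^f$ with $f\in O^\pi_{\mathbb R}$, note that $f$ vanishes at all sufficiently large positive integers, and apply the Ramanujan tail-rigidity lemma to $f$ and the zero function. Your extra checks (the index shift guaranteeing $N\ge1$, membership of the zero function in $O^\pi$, and the explicit contradiction for $(1,0,0,\ldots)$) are details the paper leaves implicit.
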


\proof{Proof.}
Write \(u=u^f\) for some \(f\in O^\pi_{\mathbb R}\). If \(u\) is eventually zero, then \(f(n)=0\) for all sufficiently large positive integers. Lemma~\ref{lem:ram-tail-rigidity}, applied to \(f\) and the zero function, gives \(f=0\), hence \(u=0\).
\Halmos\endproof

\begin{keycorollary}[Unique admissible finite prefix]
Fix a literal Ramanujan-admissible tail \(u=(u_0,u_1,\ldots)\) and a prefix length \(m\). There is at most one prefix
\[
(c_0,\ldots,c_{m-1})
\]
for which
\[
(c_0,\ldots,c_{m-1},u_0,u_1,\ldots)
\in D_{\mathcal R}.
\]
In particular,
\[
c\oplus u,\ d\oplus u\in D_{\mathcal R}
\quad\Longrightarrow\quad
c=d.
\]
\end{keycorollary}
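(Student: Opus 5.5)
The plan is to reduce the statement to the tail-rigidity lemma (Lemma~\ref{lem:ram-tail-rigidity}) by comparing analytic interpolants of two admissible prefixed streams. Suppose two prefixes \((c_0,\ldots,c_{m-1})\) and \((d_0,\ldots,d_{m-1})\) both give streams in \(D_{\mathcal R}\). By the definition \eqref{eq:ram-domain}, there are \(f,g\in O^\pi_{\mathbb R}\) with
\[
u^f=(c_0,\ldots,c_{m-1},u_0,u_1,\ldots),
\qquad
u^g=(d_0,\ldots,d_{m-1},u_0,u_1,\ldots).
\]
Under the indexing \eqref{eq:ram-sample-stream}, \(u^f_t=f(t+1)\). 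Therefore \(f(n)=g(n)=u_{n-m-1}\) for every integer \(n\ge m+1\).

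Next I apply Lemma~\ref{lem:ram-tail-rigidity} with \(N=m+1\ge1\). This gives \(f=g\) on their common right half-plane of analyticity. In particular \(f(j)=g(j)\) for \(j=1,\ldots,m\), which is exactly \(c_{j-1}=d_{j-1}\), so the prefix is unique.

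The ``in particular'' clause is the case \(m=1\), since \(c\oplus u=(c,u_0,u_1,\ldots)\) has a length-one prefix. Admissibility of the tail \(u\) itself is never used; it only explains why the question is natural. So the argument actually shows that any two streams in \(D_{\mathcal R}\) that agree eventually are equal.

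The only step needing care is that the interpolants \(f\) and \(g\) may have different half-planes \(\operatorname{Re}z>a\) and different type constants \(\beta<\pi\). I would handle this as the lemma's proof already does: \(h=f-g\) is analytic on the intersection of the two half-planes, which still has \(a<1\). On that region the triangle inequality gives
\[
|h(z)|\le 2\max\{C_f,C_g\}\,e^{\max\{\beta_f,\beta_g\}|z|}.
\]
Hence \(h\in O^\pi\), and the lemma applies.

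Finally, the integer points \(1,\ldots,m\) lie in the domain of analyticity because \(a<1\). So evaluating the identity \(f=g\) at these points is legitimate, and no genuine obstacle remains.
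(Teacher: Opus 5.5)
Your proof is correct and follows essentially the same route as the paper: take the $O^\pi_{\mathbb R}$ interpolants of the two admissible streams, observe that they agree at every integer $n\ge m+1$, apply the tail-rigidity lemma, and read off $c_{j-1}=d_{j-1}$ by evaluating at $j=1,\ldots,m$. Your extra checks, that $f-g\in O^\pi$ and that the points $1,\ldots,m$ lie in the common half-plane since $a<1$, make explicit what the paper's one-line proof leaves implicit.
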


\proof{Proof.}
If two admissible streams share the same infinite tail after a fixed finite prefix, their corresponding \(O^\pi\) interpolants agree at all sufficiently large positive integers. Lemma~\ref{lem:ram-tail-rigidity} makes the interpolants identical, so all prefix samples must coincide.
\Halmos\endproof

\subsection{A one-state policy-prefix domain obstruction}

The preceding corollary has an immediate controlled-MDP consequence. Under the literal evaluation, a one-step Bellman-style comparison between two current-action alternatives sharing the same continuation requires both prefixed streams to lie in its domain. Tail rigidity makes that elementary substitution fail for the literal sample domain.

\begin{theorem}[One-state policy-prefix domain obstruction]
\label{thm:ram-domain-obstruction}
Consider the one-state deterministic self-loop MDP under a policy class containing deterministic nonstationary policies, with actions \(a_0,a_1\) and rewards
\[
r(a_0)=0,
\qquad
r(a_1)=1.
\]
Let \(\pi_0\) choose \(a_0\) forever. Its expected stage-reward stream
\[
u=(0,0,0,\ldots)
\]
belongs to \(D_{\mathcal R}\). If instead one chooses \(a_1\) once and then follows the same continuation \(\pi_0\), the resulting stream is
\[
v=(1,0,0,\ldots),
\]
and
\[
v\notin D_{\mathcal R}.
\]
Consequently, literal Candelpergher Ramanujan admissibility is not closed under a one-step action deviation followed by a common continuation.
\end{theorem}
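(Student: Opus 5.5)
The argument has three short steps: identify the two streams, show the zero stream is admissible, and show the deviated stream is not.

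First, the streams. In the one-state self-loop model every action returns to the same state. Following \(\pi_0\), the deterministic reward at every stage is \(r(a_0)=0\), so \(u=0\). Playing \(a_1\) once and then \(\pi_0\) is a deterministic nonstationary policy, so it lies in the stated policy class. Its stage rewards are \(1\) at time \(0\) and \(0\) afterwards, so \(v=(1,0,0,\ldots)=1\oplus u\).

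Second, \(u\in D_{\mathcal R}\). Take \(f\equiv 0\). It is entire, bounded, and satisfies \(|f(z)|\le Ce^{\beta|z|}\) for any \(\beta<\pi\) on any half-plane \(\operatorname{Re}z>a\) with \(a<1\). Hence \(f\in O^\pi\), and it is real on the real axis, so \(f\in O^\pi_{\mathbb R}\). Its sample stream is \(u^f=(f(1),f(2),\ldots)=0\), so \(u\in D_{\mathcal R}\) by \eqref{eq:ram-domain}. As a consistency check, \(R_f\equiv0\) solves \eqref{eq:ram-difference}, so \(\mathcal R(u)=0\); this is not needed for admissibility.

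Third, \(v\notin D_{\mathcal R}\). This follows directly from the corollary ``No nonzero finite-support literal sample stream'', which is deduced from Lemma~\ref{lem:ram-tail-rigidity}. Suppose \(v=u^f\) for some \(f\in O^\pi_{\mathbb R}\). Then \(f(n)=0\) for all integers \(n\ge2\). Tail rigidity with \(N=2\) and \(g=0\) gives \(f\equiv0\), which contradicts \(f(1)=1\).

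There is an alternative route through the ``unique admissible finite prefix'' corollary. The tail \(u\) is admissible, and \(0\oplus u=u\in D_{\mathcal R}\). So if \(1\oplus u\) were also admissible, that corollary would force \(1=0\). Either route yields the conclusion: the two current-action alternatives \(a_0\) and \(a_1\), both followed by the common continuation \(\pi_0\), produce streams of which only one lies in the evaluation domain.

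The main obstacle is not in this argument but upstream, in the Carlson step of Lemma~\ref{lem:ram-tail-rigidity}. There one must translate the half-plane \(\operatorname{Re}z>a\) with \(a<1\) so that it contains \(\operatorname{Re}z>-d\), while keeping the type \(\beta<\pi\). The lemma's proof already handles this with the shift \(q(z)=h(z+N)\), so the theorem needs only the bookkeeping above.
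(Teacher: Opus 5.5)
Your proposal is correct and matches the paper's proof: the zero function in $O^\pi_{\mathbb R}$ witnesses $u\in D_{\mathcal R}$, and the finite-support corollary (tail rigidity applied against the zero function) excludes $v=(1,0,0,\ldots)$. Your alternative route through the unique-admissible-prefix corollary is also valid, and it rests on the same tail-rigidity lemma.
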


\proof{Proof.}
The zero stream is generated by the zero function, which lies in \(O^\pi_{\mathbb R}\), so \(u\in D_{\mathcal R}\). The deviating stream \(v\) is nonzero and has finite support. The preceding finite-support corollary therefore gives
\[
v\notin D_{\mathcal R}.
\]
The two policies have exactly the same continuation after the first action, so the failure is precisely a failure of one-step policy-prefix closure.
\Halmos\endproof

\begin{keyremark}[Domain failure, not an infinite-memory theorem]
Theorem~\ref{thm:ram-domain-obstruction} occurs before a realization-dimension question is well posed for the full policy class. The literal objective is not closed under the control operation itself. The conclusion is therefore \emph{domain incompatibility}; it does not state that Ramanujan summation requires an infinite-dimensional Bellman state, nor that every Ramanujan-type method is incompatible with dynamic programming. Nor does the theorem rule out constructing a larger sequence-level domain and an extension that agrees with the literal functional on \(D_{\mathcal R}\) while assigning values to additional streams. It states only that the literal Candelpergher admissible domain itself is not closed under this elementary control operation.
\end{keyremark}

The control significance is stronger than the observation that one analytic sequence is inadmissible. In the one-state model, the two policies differ only in the current action and share the entire future continuation. A Bellman-style action comparison would therefore need to place both prefixed streams under the same evaluation rule, yet the literal domain contains only one of them. The obstruction occurs before a recursive state, scalar backup, or optimality operator can be asked to compare those actions.

This also explains why restricted fixed-policy evaluation and controlled policy branching must be separated. A structured fixed policy can generate an analytically admissible stream even though an ordinary one-step deviation from the same continuation leaves the literal domain.

\subsection{The scalar Ramanujan value is not a sufficient one-step state}

The domain obstruction is the earliest failure for the full controlled policy class. Independently, even on the analytic function class where the literal functional is defined, the current sample and scalar tail value do not determine the current literal value.

\begin{proposition}[Scalar Ramanujan value insufficiency]
\label{prop:ram-scalar-insufficiency}
There is no universal map
\[
W:\mathbb{R}^2\to\mathbb{R}
\]
such that
\begin{equation}
\label{eq:ram-scalar-recursion}
\mathcal R(f)
=
W\bigl(f(1),\mathcal R(Sf)\bigr)
\end{equation}
for every \(f\in O^\pi_{\mathbb R}\).
\end{proposition}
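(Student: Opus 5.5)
The plan is to exhibit two functions \(f,g\in O^\pi_{\mathbb R}\) with \(f(1)=g(1)\) and \(\mathcal R(Sf)=\mathcal R(Sg)\) but \(\mathcal R(f)\neq\mathcal R(g)\). Any universal \(W\) satisfying \eqref{eq:ram-scalar-recursion} would then give two different values at the same argument, a contradiction.

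The shift law \eqref{eq:ram-shift} rewrites the recursion as
\[
\mathcal R(f)=f(1)+\mathcal R(Sf)-\int_1^2 f(x)\,dx.
\]
It therefore suffices to match \(f(1)\) and \(\mathcal R(Sf)\) while the integral \(\int_1^2 f\) differs. We also need \(Sf\in O^\pi_{\mathbb R}\); translating by \(1\) keeps the half-plane condition \(a<1\) and preserves the type bound \(e^{\beta|z+1|}\le e^{\beta}e^{\beta|z|}\), just as in the proof of Lemma~\ref{lem:ram-tail-rigidity}. So \(\mathcal R(Sf)\) is well defined.

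The construction uses a function that vanishes at all positive integers. Take \(g=0\) and \(f(x)=\sin(\pi x)\cdot\kappa(x)\). The natural first attempt is \(f(x)=\sin(\pi x)\), but its exponential type is exactly \(\pi\), not strictly less. So \(O^\pi\) excludes it, and this is consistent with Carlson uniqueness: no nonzero function of type \(<\pi\) vanishes at all large integers. Consequently no admissible \(f\) can agree with \(g=0\) on every sample. This is the main obstacle, and the fix is to drop the requirement that the sample streams coincide. We only need the two scalars \(f(1)\) and \(\mathcal R(Sf)\) to match.

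I would therefore work with a two-parameter family of simple admissible functions, for instance
\[
f_{a,b}(x)=a+b\,x \qquad\text{or}\qquad f_{a,b}(x)=a+b\,e^{-x}.
\]
These are entire, real, and of type \(0\), hence in \(O^\pi_{\mathbb R}\). Their Ramanujan values are computable from \eqref{eq:ram-difference}: for a polynomial \(p\), solve \(R(x)-R(x+1)=p(x)\) with a polynomial \(R\) (Bernoulli-type) and normalize by \(\int_1^2R=0\). This gives \(\mathcal R(1)=1/2\) and \(\mathcal R(x)=-1/12\), and Candelpergher lists these values explicitly.

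Both \(f\mapsto f(1)\) and \(f\mapsto\mathcal R(Sf)\) are linear, and so is \(f\mapsto\int_1^2 f\). It therefore suffices to find a nonzero admissible \(h\) in the joint kernel
\[
h(1)=0,\qquad \mathcal R(Sh)=0,
\]
with \(\int_1^2 h\neq0\). Then \(f=h\) and \(g=0\) give the contradiction, since \(\mathcal R(h)=-\int_1^2 h\neq0=\mathcal R(0)\).

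To find \(h\), use a three-dimensional space such as the quadratics \(\{1,x,x^2\}\), or \(\{1,x,e^{-x}\}\). Imposing the two linear constraints leaves at least a one-dimensional solution set, and it remains to check that \(\int_1^2 h\) does not vanish on it.

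Concretely, with polynomials, \(\mathcal R(Sp)=\mathcal R(p)-p(1)+\int_1^2p\), so \(\mathcal R(Sh)=0\) and \(h(1)=0\) give \(\mathcal R(h)=-\int_1^2h\). A cleaner route uses linearity of \(\mathcal R\) directly on polynomials:
\[
\mathcal R(1)=\tfrac12,\qquad \mathcal R(x)=-\tfrac1{12},\qquad \mathcal R(x^2)=0.
\]
The last value holds because \(R=\tfrac{x^3}{3}-\tfrac{x^2}{2}+\tfrac{x}{6}\), adjusted by a constant via the normalization, in the Bernoulli-polynomial form.

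The final step is a \(3\times3\) linear-algebra computation in the coefficients of \(h=\alpha+\beta x+\gamma x^2\), showing the solution line has \(\int_1^2h=\alpha+\tfrac32\beta+\tfrac73\gamma\neq0\). If this degenerates, I would substitute \(e^{-x}\) for \(x^2\) and repeat.
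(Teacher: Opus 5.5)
Your reduction is sound and matches the paper's route. Linearity plus the shift law show that any $h$ with $h(1)=0$ and $\mathcal R(Sh)=0$ satisfies $\mathcal R(h)=-\int_1^2 h$. So a quadratic $h$ with $\int_1^2 h\neq0$, compared against $g=0$, proves the proposition, and this is exactly the paper's structure. However, the one step that carries all the content, the nondegeneracy check, is left undone, and the data you plan to use for it are wrong.

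Under the normalization $R_f(x)-R_f(x+1)=f(x)$, $\int_1^2R_f=0$ that defines $\mathcal R$, one gets $R_x(x)=-\tfrac{x^2}{2}+\tfrac{x}{2}+\tfrac{5}{12}$ and $R_{x^2}(x)=-\tfrac{x^3}{3}+\tfrac{x^2}{2}-\tfrac{x}{6}+\tfrac13$. Hence $\mathcal R(x)=\tfrac{5}{12}$ and $\mathcal R(x^2)=\tfrac13$, not $-\tfrac1{12}$ and $0$. Your values are the zeta-regularized numbers $\zeta(-1)$ and $\zeta(-2)$. Your cubic for $x^2$ also has the wrong sign: it solves $R(x+1)-R(x)=x^2$. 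Its value $0$ at $x=1$ omits the normalizing constant.

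This error is not harmless. With your values, the two constraints force $h=\gamma(x-1)^2$, which is not a witness: $S(x-1)^2=x^2$ and $\mathcal R(x^2)=\tfrac13\neq0$.

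With the correct values, write $h=\alpha+\beta x+\gamma x^2$. The constraints become $\alpha+\beta+\gamma=0$ and $\mathcal R(Sh)=\tfrac{5}{12}(\beta+2\gamma)+\tfrac13\gamma=\tfrac{1}{12}(5\beta+14\gamma)=0$. These give $h=5x^2-14x+9$, with $\int_1^2 h=-\tfrac13$ and therefore $\mathcal R(h)=\tfrac13\neq0=\mathcal R(0)$. This is precisely the paper's witness.

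So your plan is right, but as written the proof is incomplete. To close it, you must actually compute the normalized remainders from the defining difference equation and integral condition, then verify the resulting $h$.
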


\proof{Proof.}
Linearity follows from uniqueness of the normalized remainder. The explicit polynomial witness $h(x)=5x^2-14x+9$ satisfies
\[
h(1)=0,\qquad \mathcal R(Sh)=0,\qquad \mathcal R(h)=\tfrac13,
\]
whereas the zero function has the same input pair $(0,0)$ and value $0$. The elementary calculations are given in Electronic Companion~EC.2.
\Halmos\endproof

Proposition~\ref{prop:ram-scalar-insufficiency} should not be conflated with Theorem~\ref{thm:ram-domain-obstruction}. The theorem says that the standard controlled policy class may leave the literal domain. The proposition says that, even within the analytic domain, one scalar continuation value is insufficient for the literal shift relation.

\subsection{Stagewise monotonicity fails}

A second independent issue concerns reward dominance.

\begin{proposition}[Stagewise monotonicity failure]
\label{prop:ram-monotonicity}
The induced real stream evaluation \(\mathcal R:D_{\mathcal R}\to\mathbb R\) is not stagewise monotone on \(D_{\mathcal R}\). In particular, there exists \(f\in O^\pi_{\mathbb R}\) such that
\[
f(n)\ge0
\qquad
\text{for every integer }n\ge1,
\]
but
\[
\mathcal R(f)<0=\mathcal R(0).
\]
Moreover, the corresponding nonnegative sample stream is realizable as the expected stage-reward stream of a one-state two-action finite MDP under a nonstationary randomized policy.
\end{proposition}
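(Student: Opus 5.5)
The plan is to exhibit an explicit witness and compute its literal value directly from the defining difference equation \eqref{eq:ram-difference}, using linearity of $\mathcal R$ (which follows from uniqueness of the normalized remainder, as already noted in the proof of Proposition~\ref{prop:ram-scalar-insufficiency}). The candidate is
\[
f(x)=1-\frac1x,
\]
whose positive-integer samples $u^f=(0,\tfrac12,\tfrac23,\tfrac34,\ldots)$ lie in $[0,1]$ and are therefore nonnegative. A quick heuristic motivates this choice: for decreasing positive $f$ the Ramanujan value behaves like ``sum minus integral'' and tends to be positive, so we want an increasing bounded $f$ instead.

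\textbf{Membership.} First I will check that $f\in O^\pi_{\mathbb R}$. The function is analytic on $\operatorname{Re}z>\tfrac12$, where it is bounded by $3$. This gives exponential type $0<\pi$ with $a=\tfrac12<1$, and $f$ is real on the real axis.

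\textbf{Computing the value.} By linearity, $\mathcal R(f)=\mathcal R(1)-\mathcal R(1/x)$, and I will compute each term.
\begin{itemize}
\item For the constant $1$: the remainder $R(x)=-x+c$ solves $R(x)-R(x+1)=1$. The normalization $\int_1^2R=0$ forces $c=\tfrac32$, so $\mathcal R(1)=\tfrac12$.
\item For $1/x$: take $R(x)=-\psi(x)+C$, using $\psi(x+1)-\psi(x)=1/x$. Since $\int_1^2\psi(x)\,dx=\log\Gamma(2)-\log\Gamma(1)=0$, we get $C=0$ and $\mathcal R(1/x)=-\psi(1)=\gamma$.
\end{itemize}
Hence $\mathcal R(f)=\tfrac12-\gamma\approx-0.077<0$. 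The zero function has $\mathcal R(0)=0$, and $u^f\ge 0=u^0$ coordinatewise, so stagewise monotonicity fails on $D_{\mathcal R}$.

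\textbf{Main obstacle.} The step needing care is confirming that these candidate remainders are the unique admissible ones. This requires $-x+\tfrac32$ and $-\psi$ to lie in $O^\pi$. The first is a polynomial. For the second, $\psi$ is analytic on $\operatorname{Re}z>0$ and grows like $\log|z|$ on $\operatorname{Re}z>\tfrac12$, so it has type $0$. Uniqueness in the definition then identifies them as $R_1$ and $R_{1/x}$; this is equivalent to Candelpergher's known values $\mathcal R(1)=\tfrac12$ and $\mathcal R(1/n)=\gamma$.

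\textbf{Realizability.} Finally, because $u^f\in[0,1]^{\Nzero}$, Lemma~\ref{lem:richness} realizes it in the one-state two-action self-loop MDP. The policy chooses $a_1$ at time $t$ with probability $f(t+1)=1-\tfrac1{t+1}$, which gives the claimed nonstationary randomized policy.
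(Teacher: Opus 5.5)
Your proof is correct, but it uses a different witness and a different way of evaluating it. The paper takes $f(x)=e^{-(x-1)}-e^{-2(x-1)}$ and quotes Candelpergher's exponential formula $\mathcal R(e^{-a(x-1)})=(1-e^{-a})^{-1}-a^{-1}$. It then gets $\mathcal R(f)<0$ because this expression is increasing in $a>0$ (via $2\sinh(a/2)>a$), without ever writing down a remainder.

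You instead take $f(x)=1-1/x$ and build the normalized remainder $\tfrac32-x+\psi(x)$ explicitly, with $\psi$ the digamma function. You check the difference equation, the normalization $\int_1^2\psi=0$, and the $O^\pi$ growth bound on $\operatorname{Re}z>\tfrac12$ directly, and you obtain the exact value $\tfrac12-\gamma$, with $\gamma$ Euler's constant. The only external fact you need is $\gamma>\tfrac12$. This follows, for instance, from the trapezoid bound $\log(1+1/k)<\tfrac12\bigl(1/k+1/(k+1)\bigr)$ summed over $k\ge1$. Your route is therefore more self-contained relative to the definition: uniqueness of the normalized remainder does all the work, and no source formula is imported.

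The paper's witness gives a sharper decision-theoretic message. Its sample stream is nonnegative and absolutely summable, with positive ordinary total $e^{-1}/(1-e^{-2})$, so the literal value reverses the sign of a convergent total reward. Your stream $\bigl(t/(t+1)\bigr)_{t\ge0}$ has partial sums diverging to $+\infty$, so your failure is exhibited only on an ordinarily divergent stream. Both streams lie in $[0,1]^{\Nzero}$, so the finite-MDP realization through the policy-stream richness lemma is the same in both cases.
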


\proof{Proof.}
Take $f(x)=e^{-(x-1)}-e^{-2(x-1)}$. Its positive-integer samples are nonnegative, while Candelpergher's exponential formula gives $\mathcal R(f)<0$. In the one-state self-loop MDP with rewards $0$ and $1$, choosing reward $1$ at time $t$ with probability $e^{-t}-e^{-2t}$ realizes the same expected-reward stream. Details are in Electronic Companion~EC.2.
\Halmos\endproof

The proposition is a decision-theoretic warning rather than a statement about domain closure: maximizing the literal functional need not respect coordinatewise improvements in expected stage rewards.

\subsection{Restricted fixed-policy calibration}

The controlled-domain obstruction does not preclude tractable evaluation on restricted fixed-policy families. Electronic Companion~EC.1 derives an explicit spectral formula for literal Ramanujan values when a finite Markov reward process has transition spectrum in $(0,1]$. The formula is expressed through the stationary projection, group inverse, and principal matrix logarithm, thereby calibrating the literal value against standard finite-state gain/bias objects \citep{lamond1989generalized,cao2008bias}. This result is intentionally supporting: fixed-policy tractability is weaker than closure under policy branching and does not repair Theorem~\ref{thm:ram-domain-obstruction}.

\subsection{Interpretation}

Literal Candelpergher evaluation can be tractable on restricted fixed-policy spectral families, yet its analytic sample domain is tail-rigid under control; within that domain, the scalar value is also insufficient for one-step recursion and stagewise monotonicity fails. The main controlled-MDP conclusion is therefore Theorem~\ref{thm:ram-domain-obstruction}. Its significance is not that Ramanujan summation has an unusual analytic domain in isolation, but that the domain excludes an elementary policy operation: changing the current action while holding the continuation fixed. Domain failure can therefore precede both Bellman optimality and realization-dimension questions. This conclusion is specific to the literal Candelpergher construction.

\section{Abel summation as a positive benchmark}
\label{sec:abel}

Literal Candelpergher Ramanujan summation can fail before recursion is posed because its analytic sample domain is not freely prefix closed. Abel summation shows that this is not a generic feature of generalized summation. For bounded streams define
\[
D_A=\left\{x\in\ell^\infty:\lim_{\alpha\uparrow1}\sum_{t\ge0}\alpha^t x_t\ \text{exists finitely}\right\},
\qquad
\mathcal A(x)=\lim_{\alpha\uparrow1}\sum_{t\ge0}\alpha^t x_t.
\]
Abel-type criteria also have a classical role in undiscounted dynamic programming \citep{blackwell1962discrete,veinott1966nodiscount,hosaka1999nondiscounted,jonsson2023axiomatic}.

\begin{proposition}[Abel translative compatibility]
\label{prop:abel-compatibility}
$D_A$ is a real vector space closed under finite prefixing. On $D_A$, $\mathcal A$ is linear, agrees with ordinary summation when the latter converges, is stagewise monotone, and satisfies
\[
\mathcal A(c\oplus x)=c+\mathcal A(x),
\qquad
\mathcal A\!\left(c\oplus\sum_i p_i x^i\right)=c+\sum_i p_i\mathcal A(x^i).
\]
Thus Abel summation is algebraically scalar Bellman-compatible on its finite-valued domain.
\end{proposition}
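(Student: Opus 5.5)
The plan is to work throughout with the power-series function $F_x(\alpha)=\sum_{t\ge0}\alpha^t x_t$, which for $x\in\ell^\infty$ converges absolutely for every $\alpha\in[0,1)$, with $|F_x(\alpha)|\le\|x\|_\infty/(1-\alpha)$. The key observation is that prefixing acts on $F_x$ by an explicit affine identity, namely
\[
F_{c\oplus x}(\alpha)=c+\alpha F_x(\alpha),
\]
and that $x\mapsto F_x(\alpha)$ is linear for each fixed $\alpha$. I will first establish the vector-space structure and linearity. If $x,y\in D_A$ and $\lambda,\mu\in\mathbb R$, then $\lambda x+\mu y\in\ell^\infty$ and $F_{\lambda x+\mu y}=\lambda F_x+\mu F_y$ on $[0,1)$. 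Limits as $\alpha\uparrow1$ are linear, so $\lambda x+\mu y\in D_A$ and $\mathcal A$ is linear. Prefix closure follows from the identity above: if $F_x(\alpha)\to\mathcal A(x)$, then $c+\alpha F_x(\alpha)\to c+\mathcal A(x)$, and $c\oplus x$ is still bounded. This yields both $c\oplus x\in D_A$ and $\mathcal A(c\oplus x)=c+\mathcal A(x)$. Iterating handles finite prefixes of any length. The mixture formula then follows by combining linearity, applied to the finite convex combination $\sum_ip_ix^i\in D_A$, with the prefix law.

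Regularity is the step I expect to be the only genuinely analytic one, and it is Abel's classical theorem. If $x\in\ell^\infty$ and $\sum_t x_t$ converges to $s$, then $F_x(\alpha)\to s$. I would prove it by summation by parts. Write $s_n=\sum_{t\le n}x_t$, so that $F_x(\alpha)=(1-\alpha)\sum_n s_n\alpha^n$ for $\alpha<1$; this rearrangement is justified because $s_n$ is bounded and the series converges absolutely. Then
\[
F_x(\alpha)-s=(1-\alpha)\sum_n(s_n-s)\alpha^n.
\]
Split this sum at an $N$ beyond which $|s_n-s|<\varepsilon$. The finite head is $O(1-\alpha)$, and the tail is at most $\varepsilon$ in absolute value. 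Alternatively, I could simply cite Abel's theorem from \citep{hardy1949divergent}. Note that ordinary convergence together with boundedness places $x$ in $D_A$, so the claim is well posed.

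Stagewise monotonicity is immediate. If $x\le y$ coordinatewise with both in $D_A$, then $F_x(\alpha)\le F_y(\alpha)$ for every $\alpha\in[0,1)$, since $\alpha^t\ge0$, and passing to the limit preserves the inequality. The final sentence of the proposition, algebraic scalar Bellman compatibility, is then a restatement. Take $W(c,v)=c+v$ for prefixing and $W_B(v)=c+pv+\sum_ip_i\mathcal A(y^i)$ for a mixture context. Both maps are affine and nondecreasing in $v$, which matches the "alg.\ pass" entries in Table~\ref{tab:compatibility-roadmap}. Continuity in the product topology is not asserted, as the table notes.
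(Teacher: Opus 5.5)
Your proposal is correct and follows essentially the same route as the paper. Both use the generating function $F_x(\alpha)$, the identity $F_{c\oplus x}(\alpha)=c+\alpha F_x(\alpha)$, linearity of limits, and pass $F_x\le F_y$ to the limit; the only differences are that you prove Abel's theorem by summation by parts where the paper cites it, and that you note explicitly that a convergent series is bounded and so lies in $\ell^\infty$.
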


\proof{Proof.}
For $F_x(\alpha)=\sum_{t\ge0}\alpha^t x_t$, absolute convergence for every $\alpha<1$ gives linearity, and $F_{c\oplus x}(\alpha)=c+\alpha F_x(\alpha)$ gives the prefix law after $\alpha\uparrow1$. Abel's theorem gives ordinary regularity \citep{hardy1949divergent}. If $x\le y$, then $F_{y-x}(\alpha)\ge0$ for all $\alpha<1$; taking the finite limit gives monotonicity.
\Halmos\endproof

Grandi's stream $(1,-1,1,-1,\ldots)$ is ordinarily divergent but satisfies
\[
\mathcal A(1,-1,1,-1,\ldots)=\lim_{\alpha\uparrow1}\frac1{1+\alpha}=\frac12.
\]
It is generated by the two-state periodic Markov reward process with $P=\bigl(\begin{smallmatrix}0&1\\1&0\end{smallmatrix}\bigr)$ and $r=(1,-1)^\top$. Thus ordinary divergence can coexist with a finite translative evaluation.

\subsection{Finite-state zero-gain characterization}

For a finite stochastic matrix $P$, reward vector $r$, and $0\le\alpha<1$, let
\[
V_\alpha=(I-\alpha P)^{-1}r,
\]
and let $\Pi$ be the spectral projection onto the eigenvalue-$1$ subspace.

\begin{proposition}[Finite-state Abel zero-gain characterization]
\label{prop:abel-finite-state}
The limit $\lim_{\alpha\uparrow1}V_\alpha$ exists finitely if and only if $\Pi r=0$. In that case
\[
V_{\mathcal A}=(I-P)^\#r.
\]
\end{proposition}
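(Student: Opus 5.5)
The plan is to decompose $\mathbb{R}^m$ along the spectral structure of $P$ and study $(I-\alpha P)^{-1}$ on each piece as $\alpha\uparrow1$. For a finite stochastic matrix, $\Pi$ is the Ces\`aro limit of $P^n$. It satisfies $\Pi P=P\Pi=\Pi^2=\Pi$, and the eigenvalue $1$ is semisimple: stochastic matrices are power bounded, so there are no nontrivial Jordan blocks at modulus-one eigenvalues. Hence $\mathbb{R}^m=\operatorname{range}\Pi\oplus\ker\Pi$, both summands are $P$-invariant, $P$ acts as the identity on $\operatorname{range}\Pi$, and $I-P$ is invertible on $\ker\Pi$. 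The group inverse $(I-P)^\#$ is by definition the inverse of $I-P$ on $\ker\Pi$, extended by zero on $\operatorname{range}\Pi$.

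Write $r=\Pi r+(I-\Pi)r$. On the first summand, $P\Pi r=\Pi r$, so
\[
(I-\alpha P)^{-1}\Pi r=\frac{1}{1-\alpha}\Pi r .
\]
This stays bounded as $\alpha\uparrow1$ only if $\Pi r=0$. On the second summand, the restriction $P|_{\ker\Pi}$ has no eigenvalue equal to $1$; its eigenvalues $\lambda$ satisfy $|\lambda|\le1$ and $\lambda\neq1$. Hence $I-\alpha P$ restricted to $\ker\Pi$ is invertible for every $\alpha\in[0,1]$. By continuity of matrix inversion on this compact parameter interval,
\[
(I-\alpha P)^{-1}(I-\Pi)r\;\longrightarrow\;(I-P)|_{\ker\Pi}^{-1}(I-\Pi)r=(I-P)^\#r .
\]

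Both directions then follow. If $\Pi r=0$, then $V_\alpha$ equals the second piece and converges to $(I-P)^\#r$. Conversely, suppose the limit exists finitely. Because $\Pi$ commutes with $P$, we have
\[
\Pi V_\alpha=(I-\alpha P)^{-1}\Pi r=(1-\alpha)^{-1}\Pi r ,
\]
and this must stay bounded as $\alpha\uparrow1$, which forces $\Pi r=0$. For a more elementary route to the converse, one can instead take the limit in $(I-\alpha P)V_\alpha=r$ to get $(I-P)V=r$ and apply $\Pi$, as in Corollary~\ref{cor:zero-gain}.

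The main obstacle is the spectral input: semisimplicity of the eigenvalue $1$ and the identification of the Ces\`aro projection with the spectral projection. I would cite this as standard finite Markov chain theory \citep{lamond1989generalized,cao2008bias}, deriving it from power boundedness of $P$. Everything else is linear algebra on the two invariant summands.
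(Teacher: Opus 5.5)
Your proposal is correct and follows essentially the same route as the paper. Both use the splitting $\operatorname{im}\Pi\oplus\ker\Pi$ from power boundedness, the identity $\Pi V_\alpha=(1-\alpha)^{-1}\Pi r$ for necessity, convergence of the resolvent on $\ker\Pi$ (where $1$ is not an eigenvalue), and the block form $0\oplus(I-P|_{\ker\Pi})^{-1}$ of the group inverse; your continuity-of-inversion argument on $[0,1]$ and the alternative converse via $(I-P)V=r$ are minor refinements of that same argument.
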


\proof{Proof.}
Because a finite stochastic matrix is power bounded, eigenvalue $1$ is semisimple and $\mathbb C^m=\operatorname{im}\Pi\oplus\ker\Pi$. On $\operatorname{im}\Pi$,
\[
\Pi V_\alpha=(1-\alpha)^{-1}\Pi r,
\]
so a finite limit requires $\Pi r=0$. On $\ker\Pi$, $1$ is absent from the spectrum, hence $(I-\alpha P)^{-1}r\to(I-P|_{\ker\Pi})^{-1}r$. Relative to $\operatorname{im}\Pi\oplus\ker\Pi$ we have
\[
I-P=0\oplus\bigl(I-P|_{\ker\Pi}\bigr),
\]
with the second block invertible. The defining equations of the group inverse therefore give
\[
(I-P)^\#=0\oplus\bigl(I-P|_{\ker\Pi}\bigr)^{-1},
\]
so the limit is $(I-P)^\#r$; see \citet{lamond1989generalized} for the finite-state Markov-chain background. Further details are in Electronic Companion~EC.3.
\Halmos\endproof

\subsection{Topology caveat}

The proposition is algebraic, not a claim of product-topology continuity. If $e_n$ has a single $1$ in coordinate $n$, then $e_n\to0$ in the product topology but $\mathcal A(e_n)=1$ for every $n$. Hence $\mathcal A$ is not product-continuous on its partial domain and should not be placed in $\Fclass_{\prodtop}^{\aut}$ under Section~\ref{sec:setup}'s continuity convention.

Abel summation is therefore a positive benchmark: its finite-valued domain is prefix closed, its scalar value is translative, and finite Markov reward processes have finite statewise Abel values exactly on the zero-gain class. This isolates the Ramanujan obstruction as method-specific rather than an unavoidable consequence of generalized summation.

\section{Related work and discussion}
\label{sec:discussion}

\subsection{Relation to realization and recursive-state theories}

Infinite expected-reward streams support axiomatic valuations beyond standard discounting \citep{jonsson2023axiomatic}. Finitely additive and related extensions provide another direction \citep{neyman2023valuations,flesch2026finitelyadditive}. Adjacent recursive-state theories operate on different objects. Bellman-closed statistics summarize return distributions \citep{rowland2019statistics}; exact finite-horizon return functionals form another class \citep{marthe2023beyond}; and preference-augmented states handle history-dependent objectives \citep{haskell2026history}. Here the earlier question is whether one-step control substitutions remain in the domain of a specified infinite stream functional, and what continuation information exact recursion requires.

For weighted criteria, finite Hankel rank, recurrence/rationality, and minimal linear/state-affine realization are classical \citep{schutzenberger1961automata,carlylepaz1971realizations,fliess1974hankel,sontag1979realization}. Analytic nonlinear minimality, topological automata, structured nonlinear rank-type lower bounds, and finite-dimensional information-state theory are also established \citep{sussmann1976minimal,ehrig1974topological,petreczky2008semialgebraic,charalambous2000information}; related canonicalization and structured state-space work continues this line \citep{grigoryeva2021canonicalization,petreczky2026stability}. Theorem~\ref{thm:weighted-hankel-characterization} occupies a specific gap: it keeps the ordinary temporal Hankel invariant, weakens the competing Euclidean encoder to continuity alone, and asks for exact autonomous continuation state under repeated one-hole updates. It is therefore not a general nonlinear input-output realization theorem, nor does it address arbitrary topological state spaces or approximation.

Sontag's bounded discrete-time realization theory is especially close: it relates Hankel/behavior information to minimal state-linear dimension and shows, under analytic, polynomial, and related regularity hypotheses, when more general realizations force state-affine finite realizability \citep{sontag1979realization}. This prevents interpreting our theorem as a claim that Hankel methods were previously confined to linear competitors. The distinction here is narrower: the lower bound assumes only continuity of the competing Euclidean encoder, and its proof is topological rather than differential or algebraic. Petreczky and Vidal likewise obtain nonlinear state-dimension lower bounds from an algebraic Hankel rank defined through Krull dimension for semi-algebraic and implicit-polynomial systems \citep{petreczky2008semialgebraic}; both the invariant and competitor class differ from those used here. Grigoryeva and Ortega provide a nearby quotient-state perspective for recurrent filters and emphasize the difficulty of geometric dimension questions for nonlinear canonical states \citep{grigoryeva2021canonicalization}.

Predictive-state/Hankel representations also appear in controlled stochastic systems, where the rank describes environment or predictive dynamics \citep{bacon2015timing}. Here $H_w$ belongs to the \emph{criterion}: Corollary~\ref{cor:criterion-memory-cost} shows that the same physical one-state MDP can require different endogenous continuation dimensions solely because the temporal objective changes.

A related comparison comes from information-based complexity. For linear problems, Kacewicz and Wasilkowski show that continuous nonlinear information is no more powerful than linear information in the worst-case approximation setting, while the average-case conclusion differs \citep{kacewicz1986continuous}. Their result concerns nonrecursive information acquisition and approximation error. Theorem~\ref{thm:weighted-hankel-characterization}, by contrast, concerns exact autonomous recursive continuation states under repeated one-hole updates and identifies the resulting dimension with the temporal Hankel rank.

\subsection{One substitution test, several failure levels}

The two main axes are unified by one operation: keep a continuation fixed, vary the current prefix or branch, and ask whether the alternatives can still be evaluated through a common recursive state. Literal Ramanujan fails the \emph{domain gate}: the two alternatives need not share an admissible domain. Weighted $\ell^1$ criteria are total on the bounded cube, so the next question is representation; temporal Hankel rank gives its exact continuous autonomous dimension. Quasi-hyperbolic weighting then shows that a finite continuation state need not collapse to the scalar value, and alternating-geometric weighting shows that scalar recursion need not preserve contextual order. Proposition~\ref{thm:translation-necessity} and Abel summation supply, respectively, the scalar summability bridge and a positive benchmark.

This hierarchy separates \emph{Layer I} structural compatibility from \emph{Layer II} optimal-control analysis. The present results do not establish optimal-policy existence, stationary reduction, Bellman optimality, contraction, or algorithmic convergence. Those questions begin only after the relevant evaluation domain and recursive state are well posed; recursive-utility Bellman results under model-specific assumptions provide one downstream example \citep{jaskiewicz2026bellman}.

\subsection{Scope and limitations}

The evaluated object is the deterministic stream of expected stage rewards. The weighted theorem concerns exact continuous autonomous Euclidean summaries; it does not exclude discontinuous or infinite-dimensional encodings, approximate compression, or explicit calendar-time augmentation. Its rank is the temporal shift rank of the weight sequence in the present affine one-hole architecture and should not be identified without proof with a different Hankel invariant. For literal Ramanujan summation, the $O^\pi$ domain, normalized difference equation, shift law, and Carlson uniqueness are source facts from Candelpergher \citep{candelpergher2017}; the paper-specific consequence is the controlled common-continuation domain obstruction. The supporting fixed-policy spectral calculations are placed in the electronic companion precisely to keep fixed-policy tractability separate from policy-branching compatibility.

\section{Conclusion}
\label{sec:conclusion}

A finite value for an infinite expected-reward stream does not by itself justify Bellman recursion. One-step control contexts must first preserve the evaluation domain; only then do exact autonomous realization, scalar sufficiency, and contextual order become meaningful.

For absolutely summable weighted criteria, temporal Hankel rank gives an exact pre-Bellman state-augmentation test in the stated architecture. Finite rank yields a constructive fixed-dimensional autonomous state; infinite rank rules out every fixed finite-dimensional continuous Euclidean encoder. Equivalently, criterion-induced temporal nonstationarity can be internalized into finite endogenous memory exactly in the finite-rank case. Because the lower bound already holds in a one-state two-action MDP, this memory requirement is attributable to the objective rather than to physical-state complexity. When the rank is infinite, exact dynamic programming can still use an external clock, an infinite-dimensional state, a discontinuous encoding, or approximation; the theorem rules out only fixed finite-dimensional continuous autonomous compression.

Literal Candelpergher Ramanujan summation fails at an earlier gate: two current actions sharing the same continuation need not both lie in the literal evaluation domain. Abel summation shows that such domain failure is not generic to generalized summation. Passing these structural gates remains prior to, and does not by itself establish, optimal-policy existence, stationary optimality, contraction, or algorithmic convergence.

The central conclusion is therefore structural rather than algorithmic:
\[
\boxed{\text{finite generalized value assignment}\;\not\Rightarrow\;\text{Bellman composability}.}
\]

% Detailed supporting proofs and calibrations are provided in the electronic companion.

% Retain the EC's original independent equation and proposition numbering.
% The printed page numbers continue through the complete arXiv document.
\setcounter{section}{0}
\setcounter{subsection}{0}
\setcounter{equation}{0}
\setcounter{theorem}{0}
\setcounter{lemma}{0}
\setcounter{proposition}{0}
\setcounter{corollary}{0}
\setcounter{definition}{0}
\setcounter{remark}{0}
\renewcommand{\theproposition}{EC.\arabic{proposition}}
\renewcommand{\theHequation}{EC.\arabic{equation}}
\renewcommand{\theHsection}{EC.\arabic{section}}
\renewcommand{\theHsubsection}{EC.\arabic{section}.\arabic{subsection}}
\renewcommand{\theHtheorem}{EC.\arabic{section}.\arabic{theorem}}
\renewcommand{\theHlemma}{EC.\arabic{section}.\arabic{lemma}}
\renewcommand{\theHproposition}{EC.\arabic{proposition}}
\renewcommand{\theHcorollary}{EC.\arabic{section}.\arabic{corollary}}
\renewcommand{\theHdefinition}{EC.\arabic{section}.\arabic{definition}}
\renewcommand{\theHremark}{EC.\arabic{section}.\arabic{remark}}
% Scope EC cross-reference keys locally: one key also occurs in the main paper.
\let\arxivOriginalLabel\label
\let\arxivOriginalRef\ref
\renewcommand{\label}[1]{\arxivOriginalLabel{EC:#1}}
\renewcommand{\ref}[1]{\arxivOriginalRef{EC:#1}}
\makeatletter
\let\ltx@label\label
\makeatother
\TITLE{Electronic Companion to: When Can an Infinite-Stream Evaluation Support Dynamic Programming? Compatibility, Recursive Compression, and Generalized Summation}
\ARTICLEAUTHORS{\AUTHOR{Yingzhe Xu}\AFF{The Australian National University}}
\ABSTRACT{This electronic companion contains supporting derivations moved from the main manuscript for review readability. It gives the full fixed-policy Ramanujan spectral calibration, the elementary algebra behind the two Ramanujan witness propositions, and the full finite-state Abel zero-gain derivation. No additional headline theorem is introduced here.}
\KEYWORDS{electronic companion; Ramanujan summation; Abel summation; Markov reward process}
\maketitle

\section*{EC.1. Fixed-policy Ramanujan spectral calibration}
This section supplies the full derivation supporting the restricted fixed-policy calibration summarized in the main paper.

\begin{proposition}[Fixed-policy Ramanujan spectral calibration]
\label{prop:ram-fixed-policy}
Let \(P\in\mathbb{R}^{m\times m}\) be a finite stochastic matrix whose every eigenvalue is real and lies in \((0,1]\).
Let \(r\in\mathbb{R}^m\), and let
\[
\Pi
=
\lim_{N\to\infty}
\frac1N\sum_{n=0}^{N-1}P^n
\]
be the Ces\`aro, equivalently spectral, projection onto the eigenvalue-\(1\) subspace. Define
\[
g=\Pi r,
\qquad
q=(I-\Pi)r,
\qquad
H=(I-P)^\#,
\]
and let
\[
C=\ker\Pi,
\qquad
Q=P|_C.
\]
Then \(\sigma(Q)\subset(0,1)\), so the real principal matrix logarithm
\[
L=\Log Q
\]
is well defined and nonsingular \citep{higham2008functions}. The vector of componentwise literal Candelpergher Ramanujan values of the statewise streams
\[
r,\ Pr,\ P^2r,\ldots
\]
is
\begin{equation}
\label{eq:ram-fixed-policy}
V_R
=
\frac12 g
+Hq
+L^{-1}q.
\end{equation}
Equivalently, since \(H\Pi=0\),
\[
V_R
=
\frac12 g
+h
+(\Log Q)^{-1}(r-g),
\qquad
h=Hr.
\]
\end{proposition}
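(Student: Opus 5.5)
The plan is to exhibit an explicit analytic interpolant of each statewise stream in $O^\pi_{\mathbb R}$ and to solve the normalized difference equation \eqref{eq:ram-difference} in closed form. Uniqueness of the normalized remainder in $O^\pi$ and linearity of $\mathcal R$ then identify the value. Since every eigenvalue of $P$ is positive, the interpolant $f_s(x)=e_s^\top P^{x-1}r$ makes sense, and I would define it through the spectral decomposition. Because $P$ is power bounded, the eigenvalue $1$ is semisimple, so $\mathbb R^m=\operatorname{im}\Pi\oplus C$ with both summands $P$-invariant. Write $r=g+q$ with $g=\Pi r$ and $q\in C$. Then $Pg=g$, and $\sigma(Q)\subset(0,1)$ because $1\notin\sigma(Q)$. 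The interpolant is therefore
\[
f(x)=g+e^{(x-1)L}q,\qquad L=\Log Q,
\]
taken componentwise. Its samples are $f(n)=P^{n-1}r$, as required. It is entire and real on the real axis, since $L$ is a real matrix.

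The constant part is elementary. For $f\equiv c$, the remainder $R(x)=c(\tfrac32-x)$ satisfies $R(x)-R(x+1)=c$ and $\int_1^2R=0$, and it lies in $O^\pi$. Hence $\mathcal R(c)=R(1)=c/2$, which gives the term $\tfrac12 g$.

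For the $C$-part I would use the ansatz $R(x)=(I-Q)^{-1}e^{(x-1)L}q+b$. It satisfies
\[
R(x)-R(x+1)=(I-Q)^{-1}(I-Q)e^{(x-1)L}q=e^{(x-1)L}q,
\]
because $e^{L}=Q$ and all these matrices commute. The normalization then reads
\[
\int_1^2(I-Q)^{-1}e^{(x-1)L}q\,dx=(I-Q)^{-1}L^{-1}(Q-I)q=-L^{-1}q,
\]
so $b=L^{-1}q$. Here $L$ is nonsingular because no eigenvalue of $Q$ equals $1$. Consequently
\[
R(1)=(I-Q)^{-1}q+L^{-1}q.
\]
Finally, $H=(I-P)^\#$ acts as $0\oplus(I-Q)^{-1}$ on $\operatorname{im}\Pi\oplus C$, exactly as in the proof of Proposition~\ref{prop:abel-finite-state}. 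Therefore $(I-Q)^{-1}q=Hq$. Adding the two parts gives \eqref{eq:ram-fixed-policy}. The equivalent form follows from $H\Pi=0$, since then $Hq=Hr=h$.

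The main obstacle is verifying that $f$ and $R$ actually lie in $O^\pi$, i.e.\ have exponential type $<\pi$ in a right half-plane $\operatorname{Re}z>a$ with $a<1$. The crude bound $\|e^{zL}\|\le e^{\|L\|\,|z|}$ is useless, because $\|L\|$ can exceed $\pi$ when $Q$ has eigenvalues close to $0$. I would instead use the real Jordan form of $L$: its eigenvalues $\log\lambda$ are real and negative. This gives
\[
\|e^{zL}\|\le K(1+|z|)^{d}\,e^{\operatorname{Re}z\cdot\max\log\lambda},
\]
where $d$ is the largest Jordan block size minus one. For $\operatorname{Re}z>0$ this is bounded by a polynomial in $|z|$, so the type is $0<\pi$. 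The same estimate covers $R$, which differs from $e^{(x-1)L}q$ only by a constant matrix factor and an additive constant; the linear function handling the constant part is trivially admissible. Once membership is established, uniqueness of the normalized remainder identifies $R$ with $R_f$. Real-valuedness of the resulting values follows from the conjugation argument given earlier for $O^\pi_{\mathbb R}$.
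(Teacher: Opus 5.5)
Your proposal is correct and follows essentially the same route as the paper's proof. It uses the same interpolant $g+e^{(z-1)L}q$, the same remainder $(\tfrac32-z)g+e^{(z-1)L}(I-Q)^{-1}q+L^{-1}q$ fixed by the normalization integral, the same identification $H=0\oplus(I-Q)^{-1}$, and the same Jordan-block argument (polynomial times $e^{\mu(z-1)}$ with $\mu<0$) to verify membership in $O^\pi$ without diagonalizability. Your growth bound with $\max\log\lambda$ holds only for $\operatorname{Re}z\ge0$, which is exactly where you apply it, so the admissibility step goes through with $a=0<1$.
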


This appendix proves the proposition stated above without a diagonalizability assumption on the transient spectral part. The argument also verifies explicitly that the matrix-valued interpolation and its normalized remainder belong componentwise to Candelpergher's admissible analytic class. We use standard facts about the principal matrix logarithm and matrix functions \citep{higham2008functions}, together with the group-inverse decomposition for finite Markov chains \citep{lamond1989generalized}.

Let
\[
\Pi
=
\lim_{N\to\infty}\frac1N\sum_{n=0}^{N-1}P^n,
\qquad
g=\Pi r,
\qquad
q=(I-\Pi)r,
\]
and set
\[
C=\ker\Pi,
\qquad
Q=P|_C,
\qquad
L=\Log Q.
\]
Because a finite stochastic matrix is power bounded, every eigenvalue on the unit circle is semisimple. Under the assumptions of the proposition stated above, the only unit-circle eigenvalue is therefore \(1\), and the decomposition
\[
\mathbb R^m
=
\operatorname{im}\Pi\oplus C
\]
is \(P\)-invariant. On \(\operatorname{im}\Pi\), \(P\) acts as the identity; on \(C\), it acts as \(Q\), whose spectrum is contained in \((0,1)\). Hence \(L\) is the real principal logarithm of \(Q\), \(e^L=Q\), and \(L\) is nonsingular.

Define the vector-valued interpolation
\begin{equation}
\label{eq:appendix-F}
F(z)
=
g+e^{(z-1)L}q.
\end{equation}
For every positive integer \(n\),
\[
F(n)
=
g+Q^{n-1}q
=
P^{n-1}r.
\]
Thus \(F\) interpolates exactly the statewise expected-reward sequence used in the literal Ramanujan evaluation.

We first verify admissibility. Put \(L=VJV^{-1}\) over \(\mathbb C\), with Jordan blocks
\[
J_\mu=\mu I+N,
\qquad
\mu=\log\lambda<0,
\qquad
N^d=0,
\]
where \(0<\lambda<1\). For a block of size \(d\),
\begin{equation}
\label{eq:jordan-exp}
e^{(z-1)J_\mu}
=
e^{\mu(z-1)}
\sum_{j=0}^{d-1}
\frac{(z-1)^j}{j!}N^j.
\end{equation}
Consequently every entry of \(e^{(z-1)L}\), and hence every component of \(F(z)\), is a finite linear combination of terms of the form
\[
p(z)e^{\mu(z-1)},
\qquad
\mu<0,
\]
with \(p\) a polynomial. On any right half-plane \(\operatorname{Re}z>a\) with \(a<1\), the exponential factor is bounded by a constant depending on \(a\), while the polynomial factor has exponential type zero. Hence every component is analytic there and has exponential type strictly below \(\pi\). Thus the components of \(F\) lie in the literal \(O^\pi\) class used by Candelpergher \citep{candelpergher2017}. Moreover, \(g\), \(q\), and \(L\) are real, so \(F(x)\) is real for real \(x\); componentwise, the interpolation therefore lies in the real subclass \(O^\pi_{\mathbb R}\) used for the decision-theoretic stream domain in the main paper. This argument covers nontrivial Jordan blocks and does not require \(Q\) to be diagonalizable.

We now construct the normalized fractional remainder directly. For the eigenvalue-\(1\) component define
\[
R_1(z)
=
\left(\frac32-z\right)g.
\]
Then
\[
R_1(z)-R_1(z+1)=g,
\qquad
\int_1^2R_1(x)\,dx=0.
\]
For the complementary component define
\begin{equation}
\label{eq:appendix-RC}
R_C(z)
=
e^{(z-1)L}(I-Q)^{-1}q
+L^{-1}q.
\end{equation}
The same Jordan-block calculation as in \eqref{eq:jordan-exp} shows that every component of \(R_C\) lies in \(O^\pi\). Because all matrices and vectors in \eqref{eq:appendix-RC} are real, \(R_C(x)\) is real for real \(x\); hence the components lie in \(O^\pi_{\mathbb R}\). The same is immediate for \(R_1\), and therefore for \(R_F=R_1+R_C\). Since \(e^L=Q\),
\begin{align*}
R_C(z)-R_C(z+1)
&=
e^{(z-1)L}
\bigl[(I-Q)^{-1}-Q(I-Q)^{-1}\bigr]q\\
&=
e^{(z-1)L}q.
\end{align*}
Moreover, because \(L\) is nonsingular,
\[
\int_1^2e^{(x-1)L}\,dx
=
\int_0^1e^{tL}\,dt
=
L^{-1}(e^L-I)
=
L^{-1}(Q-I).
\]
Therefore
\begin{align*}
\int_1^2R_C(x)\,dx
&=
L^{-1}(Q-I)(I-Q)^{-1}q+L^{-1}q\\
&=
-L^{-1}q+L^{-1}q\\
&=0.
\end{align*}

Set
\[
R_F(z)=R_1(z)+R_C(z).
\]
Combining the preceding identities gives
\[
R_F(z)-R_F(z+1)
=
g+e^{(z-1)L}q
=
F(z),
\]
and
\[
\int_1^2R_F(x)\,dx=0.
\]
By the uniqueness in Candelpergher's literal definition, \(R_F\) is the normalized fractional remainder of the interpolation \(F\). Hence the componentwise literal Ramanujan value is
\begin{align*}
V_R
=R_F(1)
&=
\frac12g+(I-Q)^{-1}q+L^{-1}q.
\end{align*}

It remains only to identify the group-inverse term. Because the eigenvalue \(1\) of \(P\) is semisimple,
\[
I-P
=
0\oplus(I-Q)
\]
relative to \(\operatorname{im}\Pi\oplus C\). Therefore its group inverse is
\[
H=(I-P)^\#
=
0\oplus(I-Q)^{-1}.
\]
In particular,
\[
H\Pi=0,
\qquad
Hq=(I-Q)^{-1}q.
\]
Substituting into the preceding expression yields
\[
V_R
=
\frac12g+Hq+L^{-1}q,
\]
which is equation~\eqref{eq:ram-fixed-policy}. Since \(q=r-g\) and \(Hr=Hq\), the equivalent form
\[
V_R
=
\frac12g+Hr+(\Log Q)^{-1}(r-g)
\]
follows as well. This completes the proof of the proposition stated above.

\section*{EC.2. Algebra for the Ramanujan witness propositions}
\subsection*{EC.2.1. Scalar-value insufficiency calculation}

For polynomials, the normalized fractional remainders
\[
R_1(x)=\frac32-x,
\qquad
R_x(x)=-\frac{x^2}{2}+\frac{x}{2}+\frac5{12},
\]
and
\[
R_{x^2}(x)=-\frac{x^3}{3}+\frac{x^2}{2}-\frac{x}{6}+\frac13
\]
satisfy $R_f(x)-R_f(x+1)=f(x)$ and have zero integral on $[1,2]$. Hence
\[
\mathcal R(1)=\frac12,
\qquad
\mathcal R(x)=\frac5{12},
\qquad
\mathcal R(x^2)=\frac13.
\]
With $h(x)=5x^2-14x+9$, one has $h(1)=0$ and $(Sh)(x)=5x^2-4x$, so linearity gives
\[
\mathcal R(Sh)=5\left(\frac13\right)-4\left(\frac5{12}\right)=0,
\]
whereas
\[
\mathcal R(h)=5\left(\frac13\right)-14\left(\frac5{12}\right)+9\left(\frac12\right)=\frac13.
\]
This is the explicit witness used in the scalar-value insufficiency proposition in the main paper.

\subsection*{EC.2.2. Monotonicity witness and finite-MDP realization}

Let
\[
f(x)=e^{-(x-1)}-e^{-2(x-1)}.
\]
Both exponential terms belong to $O^\pi$, and $f(n)\ge0$ at every positive integer. Candelpergher's exponential formula \citep[Eq.~(1.15)]{candelpergher2017} gives
\[
\mathcal R\!\left(e^{-a(x-1)}\right)=\psi(a),
\qquad
\psi(a)=\frac{1}{1-e^{-a}}-\frac1a.
\]
Therefore $\mathcal R(f)=\psi(1)-\psi(2)$. Since
\[
\psi'(a)=\frac1{a^2}-\frac{1}{4\sinh^2(a/2)}>0
\qquad(a>0),
\]
using $2\sinh(a/2)>a$, we obtain $\mathcal R(f)<0$.

The corresponding samples are realized in the one-state self-loop MDP with rewards $0$ and $1$: at time $t$, choose the reward-$1$ action with probability
\[
p_t=e^{-t}-e^{-2t}\in[0,1].
\]
Then $\mathbb E[r_t]=p_t=f(t+1)$, proving the finite-MDP part of the stagewise-monotonicity proposition in the main paper.

\section*{EC.3. Finite-state Abel zero-gain derivation}
For completeness, we record the full finite-state proof behind the compressed argument in the main paper. For a finite Markov reward process with stochastic matrix $P$ and reward vector $r$, define
\begin{equation}
\label{eq:abel-resolvent}
V_\alpha
=
\sum_{n=0}^{\infty}\alpha^nP^nr
=
(I-\alpha P)^{-1}r,
\qquad
0\le\alpha<1.
\end{equation}
Let $\Pi$ denote the spectral, equivalently Ces\`aro, projection onto the eigenvalue-$1$ subspace.

\begin{proposition}[Finite-state Abel zero-gain characterization]
\label{prop:abel-finite-state}
For a finite stochastic matrix $P$ and reward vector $r$, the statewise Abel limit
\[
\lim_{\alpha\uparrow1}V_\alpha
\]
exists finitely if and only if
\begin{equation}
\label{eq:abel-zero-gain}
\Pi r=0.
\end{equation}
When this condition holds,
\begin{equation}
\label{eq:abel-group-inverse}
V_{\mathcal A}
=
\lim_{\alpha\uparrow1}V_\alpha
=
(I-P)^\#r.
\end{equation}
\end{proposition}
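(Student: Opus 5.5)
The plan is a spectral splitting of $\mathbb C^m$ (or $\mathbb R^m$) into the eigenvalue-$1$ part and its complement, followed by a separate analysis of the resolvent $V_\alpha=(I-\alpha P)^{-1}r$ on each piece.

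\textbf{Semisimplicity and the splitting.} A finite stochastic matrix is power bounded, since $\|P^n\|_\infty=1$. A nontrivial Jordan block at eigenvalue $1$ would make $\|P^n\|$ grow polynomially, so the eigenvalue $1$ is semisimple. Hence the Ces\`aro limit $\Pi$ exists and is the spectral projection. It satisfies $\Pi P=P\Pi=\Pi$, and the decomposition $\mathbb C^m=\operatorname{im}\Pi\oplus\ker\Pi$ is $P$-invariant. On $\operatorname{im}\Pi$, $P$ acts as the identity. On $C=\ker\Pi$, $Q=P|_C$ has spectrum in the closed unit disk with $1$ removed.

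\textbf{Resolvent on each piece.} Since $(I-\alpha P)^{-1}=\sum_n\alpha^nP^n$ commutes with $\Pi$, applying $\Pi$ gives
\[
\Pi V_\alpha=\sum_n\alpha^n\Pi r=(1-\alpha)^{-1}\Pi r.
\]
If the limit of $V_\alpha$ exists finitely, then so does the limit of $\Pi V_\alpha$. This forces $\Pi r=0$, which is the necessity direction.

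For sufficiency, assume $\Pi r=0$, so $r\in C$ and $V_\alpha=(I-\alpha Q)^{-1}r$. Because $1\notin\sigma(Q)$, the matrix $I-Q$ is invertible. Matrix inversion is continuous in the entries, so $(I-\alpha Q)^{-1}\to(I-Q)^{-1}$ as $\alpha\uparrow1$. Note that $I-\alpha Q$ is invertible for all $\alpha\in[0,1]$: for $\alpha<1$ this holds since $\rho(\alpha Q)<1$, and at $\alpha=1$ it holds since $1\notin\sigma(Q)$. The limit therefore exists and equals $(I-Q)^{-1}r$.

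\textbf{Group-inverse identification.} Relative to the splitting, $I-P=0\oplus(I-Q)$. I will define $G=0\oplus(I-Q)^{-1}$ and check the three defining equations of the group inverse blockwise:
\[
(I-P)G(I-P)=I-P,\qquad G(I-P)G=G,\qquad (I-P)G=G(I-P).
\]
Uniqueness of the group inverse then gives $G=(I-P)^\#$. Since $r\in C$, we get $(I-P)^\#r=(I-Q)^{-1}r=V_{\mathcal A}$. Finally, the spectral data are real because $\Pi$ is a real Ces\`aro limit, so the limit is real.

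\textbf{Main obstacle.} The only delicate point is justifying semisimplicity of eigenvalue $1$ and the equality of the Ces\`aro and spectral projections. I would handle this through the Jordan form and power-boundedness, as sketched above. The remaining steps are routine block-matrix algebra.
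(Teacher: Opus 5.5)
Your proposal is correct and follows essentially the same route as the paper: power-boundedness gives semisimplicity of the eigenvalue $1$ and the $P$-invariant splitting $\operatorname{im}\Pi\oplus\ker\Pi$, the identity $\Pi V_\alpha=(1-\alpha)^{-1}\Pi r$ gives necessity, and for $r\in\ker\Pi$ the resolvent converges to $(I-P|_{\ker\Pi})^{-1}r$, which is identified with $(I-P)^\# r$ through the block form $0\oplus(I-P|_{\ker\Pi})^{-1}$. Your explicit blockwise check of the three group-inverse equations, the continuity-of-inversion argument, and the realness remark only spell out steps that the paper states briefly.
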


\proof{Proof.}
Because $P$ is finite and stochastic, it is power bounded. Hence the eigenvalue $1$ is semisimple, and the state space decomposes into the invariant direct sum
\[
\mathbb C^m
=
\operatorname{im}\Pi
\oplus
\ker\Pi.
\]
The projection $\Pi$ commutes with $P$, and $P$ acts as the identity on $\operatorname{im}\Pi$. Applying $\Pi$ to \eqref{eq:abel-resolvent} gives
\[
\Pi V_\alpha
=
\Pi(I-\alpha P)^{-1}r
=
\frac{1}{1-\alpha}\Pi r.
\]
If $\Pi r\ne0$, this term has no finite limit as $\alpha\uparrow1$. Thus \eqref{eq:abel-zero-gain} is necessary.

Now suppose $\Pi r=0$. Then $r\in\ker\Pi$, and $1$ is not an eigenvalue of the restriction $P|_{\ker\Pi}$. Therefore
\[
(I-\alpha P)^{-1}r
\longrightarrow
(I-P|_{\ker\Pi})^{-1}r
\qquad
(\alpha\uparrow1).
\]
Relative to $\operatorname{im}\Pi\oplus\ker\Pi$,
\[
I-P=0\oplus\bigl(I-P|_{\ker\Pi}\bigr),
\]
and the second block is invertible. Hence the defining equations of the group inverse give
\[
(I-P)^\#=0\oplus\bigl(I-P|_{\ker\Pi}\bigr)^{-1};
\]
see \citet{lamond1989generalized} for the finite-state Markov-chain background. Therefore the finite limit equals
\[
(I-P)^\#r,
\]
which proves \eqref{eq:abel-group-inverse}.
\Halmos\endproof

\end{document}